\begin{document}

\newtheorem*{theorems}{Theorem}
\newtheorem{theorem}{Theorem}[section]
\newtheorem{lemma}[theorem]{Lemma}
\newtheorem{proposition}[theorem]{Proposition}
\newtheorem{corollary}[theorem]{Corollary}
\newtheorem{bigtheorem}{Theorem}

\theoremstyle{definition}
\newtheorem{definition}[theorem]{Definition}
\newtheorem{example}[theorem]{Example}
\newtheorem{formula}[theorem]{Formula}
\newtheorem{nothing}[theorem]{}

\theoremstyle{remark}
\newtheorem{remark}[theorem]{Remark}

\renewcommand{\arraystretch}{1.2}

\newcommand{\de}{\partial}
\newcommand{\desude}[2]{{\dfrac{\de #1}{\de #2}}}
\newcommand{\mapor}[1]{{\stackrel{#1}{\longrightarrow}}}
\newcommand{\ormap}[1]{{\stackrel{#1}{\longleftarrow}}}
\newcommand{\mapver}[1]{\Big\downarrow\vcenter{\rlap{$\scriptstyle#1$}}}

\newcommand{\binfty}{\boldsymbol{\infty}}
\newcommand{\bi}{\boldsymbol{i}}
\newcommand{\bl}{\boldsymbol{l}}

\renewcommand{\bar}{\overline}
\renewcommand{\Hat}[1]{\widehat{#1}}

\newcommand{\sA}{\mathcal{A}}
\newcommand{\Oh}{\mathcal{O}}
\newcommand{\sF}{\mathcal{F}}
\newcommand{\sH}{\mathcal{H}}
\newcommand{\sL}{\mathcal{L}}
\newcommand{\sB}{\mathcal{B}}
\newcommand{\sY}{\mathcal{Y}}
\newcommand{\g}{\mathfrak{g}}

\newcommand{\K}{\mathbb{K}}
\newcommand{\Proj}{\mathbb{P}}

\newcommand{\DER}{{{\mathcal D}er}}
\newcommand{\Id}{\operatorname{Id}}

\newcommand{\Spec}{\operatorname{Spec}}
\newcommand{\ad}{\operatorname{ad}}
\newcommand{\MC}{\operatorname{MC}}
\newcommand{\Def}{\operatorname{Def}}
\newcommand{\Hom}{\operatorname{Hom}}
\newcommand{\End}{\operatorname{End}}
\newcommand{\image}{\operatorname{Im}}
\newcommand{\Der}{\operatorname{Der}}
\newcommand{\Mor}{\operatorname{Mor}}
\newcommand{\Cone}{\operatorname{Cone}}
\newcommand{\Aut}{\operatorname{Aut}}
\newcommand{\coker}{\operatorname{Coker}}
\newcommand{\tot}{\operatorname{Tot}}

\newcommand{\Art}{\mathbf{Art}}
\newcommand{\Set}{\mathbf{Set}}

\newcommand{\Grass}{\operatorname{Grass}}
\newcommand{\Flag}{\operatorname{Flag}}
\newcommand{\Hoch}{\operatorname{Hoch}}
\newcommand{\contr}{{\mspace{1mu}\lrcorner\mspace{1.5mu}}}

\newenvironment{acknowledgement}{\par\addvspace{17pt}\small\rm
\trivlist\item[\hskip\labelsep{\it Acknowledgement.}]}
{\endtrivlist\addvspace{6pt}}


\title{An algebraic proof of Bogomolov-Tian-Todorov theorem}
\author{Donatella Iacono}
\address{\newline Institut f\"ur Mathematik,\hfill\newline Johannes
Gutenberg-Universit\"at,
\hfill\newline Staudingerweg 9,
D 55128 Mainz Germany.}
\email{iacono@uni-mainz.de}

\author{Marco Manetti}
\address{\newline Dipartimento di Matematica \lq\lq Guido
Castelnuovo\rq\rq,\hfill\newline
Sapienza Universit\`a di Roma, \hfill\newline
P.le Aldo Moro 5,
I-00185 Roma Italy.}
\email{manetti@mat.uniroma1.it}
\urladdr{www.mat.uniroma1.it/people/manetti/}

\date{}

\begin{abstract}
We give a completely algebraic proof of the Bogomolov-Tian-Todorov theorem. More
precisely, we shall prove that if $X$ is a smooth projective
variety with trivial canonical bundle defined over an
algebraically closed field of  characteristic $0$, then the
$L_{\infty}$-algebra governing infinitesimal deformations of $X$
is quasi-isomorphic to an abelian differential graded Lie algebra.
\end{abstract}

\maketitle

\tableofcontents

\section*{Introduction}

Let $X$ be a smooth projective variety over an algebraically
closed field $\K$ of characteristic 0, with  tangent sheaf
$\Theta_X$. Given an affine open cover $\mathcal{U}=\{U_i\}$ of
$X$, we can consider the \v{C}ech  complex
$\check{C}(\mathcal{U},\Theta_X)$. By classical deformation
theory \cite{Kobook,Sernesi}, the group $H^1(\mathcal{U},\Theta_X)$ classifies first
order deformations of $X$, while $H^2(\mathcal{U},\Theta_X)$ is an
obstruction space for $X$.

Moreover, as a consequence of the results contained  in
\cite{hinich,hinich-schechtman-a,hinich-schechtman-b,scla},  there exists a
canonical sequence of higher brackets on
$\check{C}(\mathcal{U},\Theta_X)$, defining an
$L_{\infty}$ structure and
governing deformations of $X$ over local Artinian $\K$-algebras,
via Maurer-Cartan equation. When $\K=\mathbb{C}$, such
$L_{\infty}$ structure is  canonically quasi-isomorphic to the Kodaira-Spencer
differential graded Lie algebra of $X$, whose Maurer-Cartan equation corresponds
to the integrability condition of almost complex structures \cite{GoMil2}.

Assume now that $X$ has trivial canonical bundle,
the well known Bogomolov-Tian-Todorov (BTT) theorem states that
$X$ has unobstructed deformations. This was first proved
by Bogolomov in \cite{bogomolov}
in the particular case of complex hamiltonian manifolds;
then, Tian \cite{Tian} and Todorov \cite{Todorov} proved  independently
the theorem for  compact K\"{a}hler manifolds
with trivial canonical bundle. Their proofs are transcendental
and make a deep use of the underlying differentiable structure
as well of the $\de\overline{\de}$-Lemma.

More algebraic proofs of BTT theorem, based on $T^1$-lifting theorem
and degeneration of the Hodge spectral sequence,  were  given in
\cite{zivran} for $\K=\mathbb{C}$  and in \cite{Kaw1,FM2}
for any $\K$ as above.

For $\K=\mathbb{C}$, the BTT theorem is also a consequence of the
stronger result \cite{GoMil2,manRENDICONTi}
that the Kodaira-Spencer differential graded Lie algebra of
$X$ is quasi-abelian, i.e.,
quasi-isomorphic to an abelian differential graded Lie algebra.
This result, also proved with transcendental methods, is important for
applications to Mirror symmetry \cite{BK}.

The main result of this paper is to give a purely algebraic proof of the
quasi-abelianity of the $L_{\infty}$-algebra
$\check{C}(\mathcal{U},\Theta_X)$ when $X$ is projective with trivial
canonical bundle. This is achieved using
the degeneration of the Hodge-de Rham
spectral sequence, proved algebraically by
Faltings,  Deligne and Illusie  \cite{faltings,DI},  and the $L_{\infty}$
description of the period map \cite{Generalized fi-ma}.

More precisely, we prove the following theorems.

\begin{theorems}[A]
Let  $X$ be a smooth projective variety  of dimension $n$ defined
over an algebraically closed field of characteristic 0. If the
contraction map
\[ H^*(\Theta_X)\xrightarrow{\bi}\Hom^*(H^*(\Omega^n_X),H^*(\Omega^{n-1}_X))\]
is injective, then for every affine open cover $\mathcal{U}$ of
$X$, the $L_\infty$-algebra $\check{C}(\mathcal{U},\Theta_X)$ is
quasi-abelian.
\end{theorems}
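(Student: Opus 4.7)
The strategy is to reduce the quasi-abelianity of $\check{C}(\mathcal{U},\Theta_X)$ to the construction of an explicit $L_\infty$-morphism
\[ \Phi : \check{C}(\mathcal{U},\Theta_X) \longrightarrow \Hom^*(H^*(\Omega^n_X),H^*(\Omega^{n-1}_X)) \]
whose target is an abelian DGLA (just a complex, in fact) and whose linear part on cohomology is the contraction map $\bi$. Given such a $\Phi$, injectivity of $\bi$ will force all higher brackets on the minimal model of $\check{C}(\mathcal{U},\Theta_X)$ to vanish, yielding quasi-abelianity.

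To produce $\Phi$, the plan is to combine two ingredients. First, invoke the $L_\infty$-theoretic description of the local period map developed in \cite{Generalized fi-ma}: the interior product of vector fields with forms defines an $L_\infty$-morphism from $\check{C}(\mathcal{U},\Theta_X)$ to a DGLA built from endomorphisms of the filtered algebraic de Rham complex $(\Omega^\bullet_X,F^\bullet)$, whose linear component is exactly the contraction action $\xi \mapsto \xi\contr(\,\cdot\,)$. Second, apply the algebraic proof of the degeneration of the Hodge--de Rham spectral sequence at $E_1$ \cite{faltings,DI}: this formality of the filtered de Rham complex implies that the previous target DGLA is quasi-isomorphic to a graded abelian DGLA whose components are $\Hom$-spaces between the Hodge pieces $H^q(\Omega^p_X)$ of $H^*_{\mathrm{dR}}(X)$, with trivial brackets for purely graded reasons. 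Projecting onto the single summand corresponding to the top of the Hodge filtration then yields $\Phi$, with linear part $\bi$ on cohomology.

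With $\Phi$ in hand, the conclusion is essentially formal. Transfer the $L_\infty$ structure to the minimal model on $H^*(\check{C}(\mathcal{U},\Theta_X))$, obtaining higher brackets $\{\ell_n\}_{n\geq 2}$, and let $\bar\Phi$ denote the induced minimal $L_\infty$-morphism into the already minimal abelian target. Since every higher bracket on the target vanishes, the $L_\infty$-morphism equations collapse inductively: the degree $n$ equation reads $\bar\Phi_1\circ\ell_n = -\sum_{j<n}\bar\Phi_{n-j+1}(\text{terms in } \ell_j)$, and once $\ell_2=\dots=\ell_{n-1}=0$ has been established, this reduces to $\bar\Phi_1\circ\ell_n=0$. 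Because $\bar\Phi_1$ coincides with $\bi$ and is injective by hypothesis, every $\ell_n$ vanishes. Thus $H^*(\check{C}(\mathcal{U},\Theta_X))$ is an abelian DGLA and $\check{C}(\mathcal{U},\Theta_X)$ is quasi-abelian.

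I expect the main obstacle to lie in the first stage. The $L_\infty$ period map of \cite{Generalized fi-ma} is most naturally formulated in terms of a mapping cone involving the entire Hodge filtration, so some care is needed to restrict to the top graded piece and verify that the resulting $L_\infty$-morphism really has $\bi$ as its linear part on cohomology. By contrast, the Hodge degeneration input is a direct quotation of \cite{faltings,DI}, and the concluding inductive argument from injectivity to the vanishing of all higher brackets is purely formal.
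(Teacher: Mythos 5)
Your proposal follows essentially the same route as the paper: the contraction of vector fields with forms is upgraded (via the Cartan-homotopy/period-map machinery of \cite{Generalized fi-ma}) to an $L_\infty$-morphism into a mapping cone built from the inclusion $\Omega^n_X\subset\Omega^{\ast}_X$, the Faltings--Deligne--Illusie degeneration makes that target quasi-abelian and the relevant maps injective in cohomology, and injectivity of $\bi$ then kills all higher brackets. The only cosmetic difference is your final step: you run the induction on the minimal model directly, where the paper packages the same fact as Lemma~\ref{lem.criterioquasiabelianita} via a splitting onto an abelian complex; both are correct and equivalent.
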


\begin{theorems}[B]
Let $X$ be   a smooth projective variety defined over an
algebraically closed field of characteristic 0. If the canonical
bundle of $X$ is trivial or torsion, then
the $L_\infty$-algebra $\check{C}(\mathcal{U},\Theta_X)$ is
quasi-abelian.
\end{theorems}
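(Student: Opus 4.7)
The plan is to derive Theorem B as a direct consequence of Theorem A by verifying, in each of the two cases, that the contraction map
\[ \bi:H^*(\Theta_X)\to\Hom^*(H^*(\Omega^n_X),H^*(\Omega^{n-1}_X)) \]
is injective.

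I would first dispose of the case in which the canonical bundle $K_X=\Omega^n_X$ is trivial. Fix a nowhere-vanishing global section $\omega\in H^0(X,\Omega^n_X)$; then contraction with $\omega$ is a morphism of locally free sheaves $\Theta_X\to\Omega^{n-1}_X$ of the same rank which, because $\omega$ does not vanish, is an isomorphism. Passing to cohomology, the induced map $H^*(\Theta_X)\to H^*(\Omega^{n-1}_X)$ is an isomorphism. But this map is precisely $\alpha\mapsto\bi(\alpha)([\omega])$, the evaluation at the class $[\omega]\in H^0(\Omega^n_X)$ of the image of $\bi$; hence $\bi$ itself is injective, and Theorem A applies.

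For the torsion case, let $m$ be the order of $K_X$ in $\mathrm{Pic}(X)$ and let $\pi\colon Y\to X$ be the associated cyclic étale cover, namely $Y=\underline{\Spec}_X\bigl(\bigoplus_{i=0}^{m-1}K_X^{\otimes-i}\bigr)$ with algebra structure induced by a chosen trivialization $K_X^{\otimes m}\cong\Oh_X$. Then $Y$ is smooth and projective, the Galois group $G=\mathbb{Z}/m\mathbb{Z}$ acts freely with quotient $X$, and $K_Y=\pi^*K_X\cong\Oh_Y$ is trivial. Since $\pi$ is étale one has $\pi^*\Theta_X=\Theta_Y$ and $\pi^*\Omega_X^k=\Omega_Y^k$, and since we are in characteristic $0$ the pullback identifies $H^*(X,\sF)$ with the $G$-invariants of $H^*(Y,\pi^*\sF)$; in particular the pullbacks $H^*(\Theta_X)\hookrightarrow H^*(\Theta_Y)$ and $H^*(\Omega^k_X)\hookrightarrow H^*(\Omega^k_Y)$ are injective. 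Naturality of contraction yields a commutative square with $\bi_X$ on top and $\bi_Y$ on the bottom, vertical arrows being the pullbacks. The previous case applied to $Y$ shows that $\bi_Y$ is injective, and injectivity of the left vertical arrow then forces $\bi_X$ to be injective as well. Theorem A closes the argument.

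There is essentially no deep obstacle: once Theorem A is available, the content of Theorem B is entirely in checking the injectivity hypothesis. The mildest technical point is the descent step in the torsion case, where one must remember that in characteristic zero cohomology of a finite étale Galois cover recovers the base through $G$-invariants, so that the pullback is injective on cohomology. Everything else is a formal diagram chase.
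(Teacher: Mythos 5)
Your argument for the trivial canonical bundle case is correct and is exactly the paper's: contraction with a nowhere-vanishing $n$-form identifies $\Theta_X$ with $\Omega^{n-1}_X$, so evaluation of $\bi$ at the class of that form is an isomorphism and Theorem~A applies. The torsion case, however, contains a genuine gap. You try to deduce injectivity of $\bi_X$ on $X$ from injectivity of $\bi_Y$ on the cyclic cover via a commutative square whose vertical arrows are pullbacks, but no such square exists: a map $\Hom^*(H^*(\Omega^n_X),H^*(\Omega^{n-1}_X))\to\Hom^*(H^*(\Omega^n_Y),H^*(\Omega^{n-1}_Y))$ would require a wrong-way (trace) map on the source factor, and the resulting diagram fails to commute because $\pi^*\pi_*\gamma=\sum_{g\in G}g^*\gamma\neq\gamma$. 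More fundamentally, the injectivity of $\bi_Y$ is witnessed by evaluation at the class of the trivializing form $\omega_Y$, which transforms by a nontrivial character of $G$ and is therefore \emph{not} pulled back from $X$; so knowing $\bi_X(\alpha)=0$ (which only controls contraction against the $G$-invariant part $H^*(\Omega^n_X)=H^*(\Omega^n_Y)^G$) gives no control on $\bi_Y(\pi^*\alpha)$ evaluated at $\omega_Y$. In fact the statement you are trying to prove is false: for an Enriques surface $H^*(\Omega^2_X)$ is concentrated in degree $2$ while $H^3(\Omega^1_X)=0$, so the contraction map kills all of $H^1(\Theta_X)\cong\K^{10}$, and the hypothesis of Theorem~\ref{thm.maintheorem} fails on $X$.

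The paper avoids this by \emph{not} reducing the torsion case to the injectivity hypothesis on $X$. Instead it applies Theorem~A to the cover $Y$ (which has trivial canonical bundle, so $\widetilde{\tot}(\Theta_Y(\mathcal{V}))$ is quasi-abelian for $\mathcal{V}=\{\pi^{-1}(U_i)\}$), and then observes that, since $\pi$ is finite \'etale and we are in characteristic zero, the natural morphism $\widetilde{\tot}(\Theta_X(\mathcal{U}))\to\widetilde{\tot}(\Theta_Y(\mathcal{V}))$ is injective in cohomology ($H^*(X,\Theta_X)$ being the $G$-invariants of $H^*(Y,\Theta_Y)$); quasi-abelianity of the source then follows from Lemma~\ref{lem.criterioquasiabelianita}. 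Your descent observation about $G$-invariants is exactly the right ingredient, but it must be fed into the $L_\infty$-level criterion of Lemma~\ref{lem.criterioquasiabelianita} rather than into the hypothesis of Theorem~A on $X$ itself.
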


The paper goes as follows: the first section is intended for the
non expert reader and is devoted to recall the basic notions of
differential graded Lie algebras, $L_{\infty}$-algebras and their
role in  deformation theory.

In Section~\ref{sec.scla},  we review the construction of the
Thom-Whitney complex associated with a semicosimplicial complex.

In Section \ref{sez.formality map-cone},
 following \cite{scla},
we introduce  semicosimplicial differential graded Lie algebras,
the associated Thom-Whitney DGLAs  and  the  $L_{\infty}$
structure on the associated total complexes. We also investigate
some properties of   mapping cones associated with a morphism of
DGLAs.

In  Sections  \ref{sez.cartan homotopy}, we collect some technical
results about Cartan homotopies and contractions.

In Section~\ref{sezione.deformazioni varieta},  we give the
definition of the deformation functor $H^1_{\rm sc}(
\exp\g^{\Delta})$, associated with a semicosimplicial Lie algebra
$\g^{\Delta}$, introduced essentially in \cite{hinich,Pridham} and
described in more detailed way in \cite{scla}. Moreover, following
\cite{scla}, we prove, in a complete algebraic way, that the
infinitesimal deformations  of a smooth variety $X$, defined over
a field of characteristic 0, are controlled by the
$L_\infty$-algebra $\check{C}(\mathcal{U},\Theta_X)$, where
$\mathcal{U}$ is an open affine cover of $X$.

Section \ref{sez.Proof main theorem} is devoted to the algebraic
proof of the previous Theorems A and B together with some
applications to deformation theory.

\begin{acknowledgement} The first draft of this paper was
written during the workshop on \lq\lq Algebraic and Geometric
Deformation Spaces\rq\rq,  Bonn August 11-15, 2008. Both authors
thank Max Planck Institute  and Hausdorff Center for Mathematics
for financial support and warm hospitality. The first author wish
to thank the Mathematical Department ``G. Castelnuovo'' of
Sapienza Universit\`a di Roma for the hospitality. Thanks also to
D. Fiorenza and E. Martinengo for useful discussions on the
subject of this paper. We are grateful to  the referee for
improvements in the exposition  of the paper.
\end{acknowledgement}

\section{Review of DGLAs and $L_{\infty}$-algebras}

Let $\K$ be a fixed algebraically closed field of characteristic
zero.  A \emph{differential graded vector space} is a pair
$(V,d)$, where $V=\oplus_i V^i$ is a $\mathbb{Z}$-graded vector
space and $d\colon V^i\to V^{i+1}$ is a differential of degree
$+1$. For every  integer $n$, we define a new differential graded
vector space $V[n]$ by setting
\[ V[n]^i=V^{n+i},\qquad  d_{V[n]}=(-1)^nd_V.\]

A \emph{differential graded Lie algebra} (DGLA for short) is
the data of a differential graded vector space $(L,d)$ together
with a  bilinear map $[-,-]\colon L\times L\to L$ (called bracket)
of degree 0 such that:\begin{enumerate}

\item (graded  skewsymmetry) $[a,b]=-(-1)^{\deg(a)\deg(b)}[b,a]$.

\item (graded Jacobi identity) $
[a,[b,c]]=[[a,b],c]+(-1)^{\deg(a)\deg(b)}[b,[a,c]]$.

\item (graded Leibniz rule) $d[a,b]=[da,b]+(-1)^{\deg(a)}[a,db]$.
\end{enumerate}

In particular,
the Leibniz rule implies that the bracket of a DGLA
$L$ induces a structure of graded Lie algebra on its cohomology
$H^*(L)=\oplus_iH^i(L)$.

\begin{example}
Let $(V,d_V)$ be a differential graded vector space and $\operatorname{Hom}^i(V,V)$
the space of morphisms  $V\to V$ of degree $i$. Then,
$\operatorname{Hom}^*(V,V)=\bigoplus_i\operatorname{Hom}^i(V,V)$
is a DGLA with bracket
\[
[f,g]=fg-(-1)^{\deg(f)\deg(g)}gf,
\]
and differential $d$ given by
\[
d(f)=[d_V,f]=d_Vf-(-1)^{\deg(f)}fd_V.
\]
For later use, we point out that there exists
a natural isomorphism
\[ H^*(\Hom^*(V,V))\mapor{\simeq}\Hom^*(H^*(V),H^*(V)).\]
\end{example}

A morphism of differential graded Lie algebras $\varphi\colon L \to M$ is  a linear map
that preserves degrees and commutes with brackets and
differentials. A \emph{quasi-isomorphism} of DGLAs is a morphism
that induces an isomorphism in cohomology. Two DGLAs $L$ and $M$ are said to be
\emph{quasi-isomorphic} if they are equivalent under the
equivalence relation generated by: $L\sim M$ if there exists
a quasi-isomorphism $\phi: L\to M$.\par

Next, denote by $\Set$  the category of sets (in a fixed universe)
and by $\Art=\Art_\K$   the category of local Artinian
$\K$-algebras with residue field $\K$. Unless otherwise specified,
for every  objects  $A\in \mathbf{Art}$, we denote by
$\mathfrak{m}_A$ its maximal ideal. Given a DGLA $L$,   we define
the Maurer-Cartan functor $\MC_L\colon \mathbf{Art}\to
\mathbf{Set}$ by setting \cite{ManettiDGLA}:
\[
\MC_L(A)=\left\{ x \in L^1\otimes \mathfrak{m}_A \mid dx+ \displaystyle\frac{1}{2}
[x,x]=0 \right\},
\]
where the DGLA structure on $L \otimes \mathfrak{m}_A$ is the natural
extension of the DGLA structure on $L$.
The gauge action $\ast:\exp(L^0 \otimes \mathfrak{m}_A)\times
\MC_L(A)\longrightarrow {\MC}_L(A)$ may be defined by the explicit formula
\[
e^a \ast x:=x+\sum_{n\geq 0} \frac{ [a,-]^n}{(n+1)!}([a,x]-da).
\]
The deformation functor $\Def_L:\Art \longrightarrow \Set$
associated to a DGLA $L$ is:
\[
\Def_L(A)=\frac{\MC_L(A)}{\text{gauge}}=\frac{\{ x \in L^1\otimes \mathfrak{m}_A \ |\ dx+
\displaystyle\frac{1}{2} [x,x]=0 \}}{\exp(L^0\otimes \mathfrak{m}_A )  }.
\]

\begin{remark}
Every morphism of DGLAs induces a natural transformation of the
associated deformation functors. A basic result asserts that if
$L$ and $M$ are quasi-isomorphic DGLAs, then the associated
functor $\Def_L$ and $\Def_M$ are isomorphic
\cite{SchSta,GoMil1,GoMil2}, \cite[Corollary~3.2]{ManettiDGLA},
\cite[Corollary~5.52]{manRENDICONTi}.
\end{remark}

Next, we briefly recall the definition of an $L_{\infty}$
structures on a graded vector space $V$. For a more detailed
description of such structures we refer to
\cite{SchSta,LadaStas,LadaMarkl,EDF,fukaya,K,getzler,cone} and
\cite[Chapter~IX]{manRENDICONTi}.

Let $V$ be a graded vector space:  we
denote by $\bigodot^n V$ its graded symmetric $n$-th power.
Given $v_1,\ldots,v_n$ homogeneous elements of $V$,
for every permutation $\sigma$ we have
\[
v_1 \odot \cdots \odot  v_n=\epsilon (\sigma;
v_1,\ldots , v_n)\ v_{\sigma(1)} \odot \cdots \odot
v_{\sigma(n)},
\]
where $\epsilon (\sigma; v_1,\ldots , v_n)$ is the  Koszul sign.
When the sequence  $v_1, \ldots , v_n$ is clear from the context, we simply
write $\epsilon(\sigma)$ instead of $\epsilon (\sigma;v_1,\ldots,v_n)$.

\begin{definition} Denote by $\Sigma_n$ the
group of permutations of the set $\{1,2,\ldots,n\}$.
The set of \emph{unshuffles} of type $(p,n-p)$ is the subset
$S(p,n-p)\subset\Sigma_{n}$ of permutations $\sigma$ such that
$\sigma(1)<\sigma(2) < \cdots < \sigma(p)$ and $\sigma(p+1)
<\sigma(p+2) < \cdots < \sigma(n)$.
\end{definition}

\begin{definition}
An $L_{\infty}$ structure  on a graded vector space $V$ is a
sequence $\{q_k\}_{k \geq 1}$ of linear maps $q_k \in \Hom^1(\odot ^k
(V[1]),V[1])$ such that the map
$$
Q:\bigoplus_{n \geq 1} \bigodot^n V[1] \to \bigoplus_{n \geq 1}
\bigodot^n V[1],
$$
defined as
$$
Q(v_1 \odot \cdots \odot  v_n)=\sum_{k=1}^n \sum_{\sigma \in
S(k,n-k)} \epsilon(\sigma)q_k( v_{\sigma(1)} \odot \cdots \odot
v_{\sigma(k)})\odot v_{\sigma(k+1)} \odot \cdots \odot
v_{\sigma(n)},
$$
is a codifferential on the reduced symmetric graded coalgebra $\bigoplus_{n \geq 1}
\bigodot^n V[1]$: in other words,  the datum $(V,q_1,q_2,q_3,
\ldots )$  is called an $L_{\infty}$-algebra if $QQ=0$.
\end{definition}

If $(V,q_1,q_2,q_3, \ldots )$  is an $L_{\infty}$-algebra, then
$q_1q_1=0$ and therefore $(V[1],q_1)$ is a differential graded
vector space. The relation between DGLAs and $L_{\infty}$-algebras
is given by the following example.

\begin{example}[\cite{Qui}]\label{oss DGLA is L infinito}
Let $(L,d,[\, , \, ])$ be a differential graded Lie algebra and define:
\[
q_1=-d: L[1] \to L[1],\]
\[q_2 \in \Hom^1(\odot ^2 (L[1]),L[1]),\qquad
q_2(v\odot w)=(-1)^{\deg(v)}[v,w],
\]
and $q_k=0$ for every $k \geq 3$. Then $(L,q_1,q_2,0,\ldots)$ is
an $L_{\infty}$-algebra.
\end{example}

An \emph{$L_{\infty}$-morphism} $(V,q_1,q_2, \ldots
)\to (W,p_1,p_2, \ldots )$ of $L_{\infty}$-algebras is a
sequence $f_{\infty}=\{f_n\}$ of degree zero linear maps
\[
f_n: \bigodot^n V[1] \to W[1], \qquad n\geq 1,
\]
such that the (unique) morphism of graded coalgebras
\[
F: \bigoplus_{n \geq 1} \bigodot^n V[1] \to \bigoplus_{n \geq 1}
\bigodot^n W[1],
\]
lifting $\sum_n f_n :\bigoplus_{n\geq 1} \bigodot^n V[1] \to
W[1]$, commutes with the codifferentials.
This condition implies that the \emph{linear part} $f_1:V[1]\to W[1] $ of an
$L_{\infty}$-morphism $f_{\infty}: (V,q_1,q_2, \ldots )\to
(W,p_1,p_2, \ldots )$ satisfies the condition $f_1 \circ q_1
=p_1 \circ f_1$,  and therefore $f_1$ is a map of differential complexes
$(V[1],q_1) \to (W[1], p_1)$.\par

An $L_{\infty}$-morphism $f_{\infty}=\{f_n\}$ is said to be linear
if $f_n=0$, for every $n \geq 2$. Notice that an
$L_{\infty}$-morphism between two DGLAs is linear if and only if
it is a morphism of differential graded Lie algebras.

A \emph{quasi-isomorphism} of $L_{\infty}$-algebra is an
$L_{\infty}$-morphism, whose linear part is a quasi-isomorphism of
complexes. Two $L_{\infty}$-algebras are \emph{quasi-isomorphic}
if they are equivalent under the
equivalence relation generated by the relation: $L\sim M$ if there exists
a quasi-isomorphism $\phi: L\to M$.

\begin{definition}\label{def.quasiabelian}
An $L_{\infty}$-algebra $(V,q_1,q_2,\ldots)$ is called \emph{abelian} if
$q_i=0$ for every $i\ge 2$.
An $L_{\infty}$-algebra  is called
\emph{quasi-abelian} if it is quasi-isomorphic to
an abelian $L_{\infty}$-algebra.
\end{definition}

Given an $L_\infty$-algebra $(V,q_1,q_2,\ldots)$ and a
differential graded commutative algebra $A$, there exists a
natural $L_{\infty}$ structure on the tensor product $V\otimes A$.
The \emph{Maurer-Cartan functor $\MC_V $ associated with the
$L_{\infty}$-algebra $V$} is the functor \cite{SchSta,fukaya,K}:
$$
\MC_V  : \Art \to \Set
$$
\[
\MC_V (A)=\left\{ \gamma\in V[1]^0\otimes \mathfrak{m}_A
\;\strut\left\vert\; \sum_{j\ge1}\frac{q_j(\gamma^{\odot
j})}{j!}=0\right.\right\}.
\]

Two elements $x$ and $y \in\MC_V (A)$ are \emph{homotopy
equivalent} if there exists $g(s)\in \MC_{V\otimes\K[s,ds]}(A)$ such that
$g(0)=x$ and $g(1)=y$. Then, the \emph{deformation functor}
$\Def_V$ associated with the $L_\infty$-algebra $V$ is
\[\Def_V  : \Art \to \Set,\qquad
\Def_V(A)=\frac{\MC_ V(A)}{\text{homotopy}}.
\]

\begin{example}
Given an abelian $L_\infty$-algebra $(V,q_1,0,0,\ldots)$, for every $A\in\Art$ there exists a  canonical isomorphism
\[ \Def_V(A)=H^0(V[1],q_1)\otimes\mathfrak{m}_A.\]
\end{example}

\begin{remark}
If $L$ is a DGLA, then the   deformation functor associated with
$L$, viewed as an   $L_\infty$-algebra, is isomorphic to the
previous one (Maurer-Cartan modulo gauge equivalence)
\cite{SchSta,EDF,fukaya,K}.
\end{remark}

\begin{remark}
As for the DGLAs, every morphism of $L_{\infty}$-algebras induces
a natural transformation of  the associated deformation functors.
If two $L_\infty$-algebras are quasi-isomorphic, then there exists
an isomorphism between the associated deformation functors
\cite{fukaya,K}, \cite[Corollary IX.22]{manRENDICONTi}. In
particular, the deformation functor of a quasi-abelian
$L_{\infty}$-algebra is unobstructed.
\end{remark}

\begin{lemma}\label{lem.criterioquasiabelianita}
Let $(V,q_1,q_2,\ldots)$, $(W,r_1,r_2,\ldots)$ be  $L_{\infty}$-algebras
with $W$ quasi-abelian and $f_{\infty}\colon V\to W$  an
$L_{\infty}$-morphism. If $f_1$ is injective in cohomology, then $V$ is quasi-abelian.
\end{lemma}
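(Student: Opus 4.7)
The plan is to pass to minimal models and then exploit the injectivity of $f_1$ in cohomology. First I would invoke the Kadeishvili-style homotopy transfer theorem to produce a minimal model $(H^*(V),q'_1,q'_2,\ldots)$ of $V$ with $q'_1=0$, together with an $L_\infty$-quasi-isomorphism $\iota_V\colon H^*(V)\to V$; analogously one obtains a minimal model $(H^*(W),r'_1,r'_2,\ldots)$ of $W$ with $r'_1=0$. Since minimal models are unique up to $L_\infty$-isomorphism within a quasi-isomorphism class, and since the minimal model of an abelian $L_\infty$-algebra is its cohomology endowed with the trivial structure, the quasi-abelianity of $W$ forces $r'_k=0$ for all $k\ge 2$ as well.

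Next I would transport $f_\infty$ through these minimal models, using the fact that $L_\infty$-quasi-isomorphisms admit homotopy inverses, to obtain an $L_\infty$-morphism $g_\infty\colon H^*(V)\to H^*(W)$ whose linear part is $H^*(f_1)$, and is therefore injective by hypothesis. The heart of the argument then becomes an induction on $n\ge 2$ showing that $q'_n=0$. For each such $n$, the $n$-th $L_\infty$-morphism equation for $g_\infty$ equates a sum of terms built from the $q'_k$ fed into the various $g_j$'s to a sum of terms built from the $r'_j$'s and several $g_i$'s; the right-hand side vanishes identically because every $r'_j$ is zero, while the inductive hypothesis $q'_2=\cdots=q'_{n-1}=0$ collapses the left-hand side to the single term $g_1\circ q'_n$. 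The injectivity of $g_1=H^*(f_1)$ then forces $q'_n=0$.

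Once all higher brackets on $H^*(V)$ are shown to vanish, $V$ is quasi-isomorphic via $\iota_V$ to the abelian $L_\infty$-algebra $(H^*(V),0,0,\ldots)$, which is precisely the desired quasi-abelianity. I expect the main technical obstacle to lie not in the inductive vanishing itself, which is clean once the setup is in place, but in the careful invocation of the homotopy transfer theorem and of the homotopy inversion of $L_\infty$-quasi-isomorphisms in a purely algebraic setting over a field of characteristic zero, together with the bookkeeping of Koszul signs and unshuffle sums needed to isolate $g_1\circ q'_n$ cleanly in each $L_\infty$-morphism equation.
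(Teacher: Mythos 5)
Your argument is correct, but it follows a genuinely different route from the paper's. The paper also begins by replacing $W$ with its minimal model $H=H^*(W,r_1)$ (trivial brackets, zero differential) via the surjective quasi-isomorphism $p_\infty\colon W\to H$ coming from homotopy transfer, so no homotopy inverse is needed on that side; but it never takes a minimal model of $V$ and runs no induction. Instead it observes that, once the target is abelian with zero differential, one can choose a linear projection $\beta\colon W[1]\to K[1]$ onto a suitable complement so that $\beta\circ f_1$ is a quasi-isomorphism of complexes (this is where the injectivity of $f_1$ in cohomology enters); since any chain map between abelian $L_\infty$-algebras is a linear $L_\infty$-morphism, the composite $\beta\circ f_\infty\colon V\to (K,0,0,\ldots)$ is already an $L_\infty$-quasi-isomorphism onto an abelian algebra, and one is done. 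Your version instead passes to the minimal model $(H^*(V),q'_1=0,q'_2,\ldots)$, transports $f_\infty$ to $g_\infty$ with injective linear part, and kills the $q'_n$ inductively from the $L_\infty$-morphism equations; the induction is sound (for each $n\ge 2$ the only surviving source term is $g_1\circ q'_n$ once $q'_1=\cdots=q'_{n-1}=0$, and the target terms vanish since all $r'_j=0$). What the paper's approach buys is brevity: it avoids the minimal model of $V$, the homotopy inversion of $\iota_W$, and all the unshuffle/sign bookkeeping you flag as the main technical burden. What your approach buys is a sharper conclusion along the way, namely an explicit identification of the minimal model of $V$ as the abelian algebra $(H^*(V),0,0,\ldots)$, which is the intrinsic characterization of quasi-abelianity. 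One small point to make airtight in your write-up: when you assert that the minimal model of a quasi-abelian algebra has all brackets zero, justify it by noting that an $L_\infty$-\emph{isomorphism} onto $(H',0,0,\ldots)$ forces the codifferential to vanish (since the corresponding coalgebra map is invertible), rather than appealing only to uniqueness of minimal models, since an algebra merely isomorphic to an abelian one is not a priori abelian without this remark.
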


\begin{proof} According to homotopy classification of  $L_{\infty}$-algebras
(see e.g. \cite{K}), if $H$ is the cohomology of the complex $(W,r_1)$,
there exists an $L_{\infty}$ structure on $H$ and a surjective quasi-isomorphism
of $L_{\infty}$-algebras $p_{\infty}\colon
W\to H$. The $L_{\infty}$ structure on $H$ depends, up to isomorphism,
by the quasi-isomorphism class of $W$ and therefore every bracket on $H$ is trivial.

Replacing $W$ with $H$ and $f_{\infty}$ with $p_{\infty}f_{\infty}$
it is not restrictive to assume $r_i=0$ for every $i$.
There exist a graded vector space $K$ and a morphism of  graded vector
spaces $\beta\colon W[1]\to K[1]$ such that the composition
\[ (V[1],q_1)\mapor{f_1}(W[1],0)\mapor{\beta}(K[1],0)\]
is a quasi-isomorphism of complexes. Then,  $\beta$ is a linear
$L_{\infty}$-morphism and the composition
\[ (V,q_1,q_2,\ldots)\xrightarrow{f_{\infty}}(W,0,0,\ldots)\xrightarrow{\beta}(K,0,0,\ldots)\]
is a quasi-isomorphism of $L_\infty$-algebras.
\end{proof}

\section{The Thom-Whitney complex}
\label{sec.scla}

Let $\mathbf{\Delta}_{\operatorname{mon}}$ be  the category whose
objects are the finite ordinal sets $[n]\!=\!\{0,1,\ldots,n\}$,
$n=0,1,\ldots$, and whose morphisms are order-preserving injective
maps among them. Every morphism in
$\mathbf{\Delta}_{\operatorname{mon}}$, different from the
identity, is a finite  composition of \emph{coface} morphisms:
\[
\partial_{k}\colon [i-1]\to [i],
\qquad \partial_{k}(p)=\begin{cases}p&\text{ if }p<k\\
p+1&\text{ if }k\le p\end{cases},\qquad k=0,\dots,i.
\]
The relations about compositions of them are generated by
\[ \partial_{l}\partial_{k}=
\partial_{k+1}\partial_{l}\, ,\qquad\text{for every }l\leq k.\]

According to \cite{EZ,weibel}, a \emph{semicosimplicial} object in
a category $\mathbf{C}$ is a  covariant functor $A^\Delta\colon
\mathbf{\Delta}_{\operatorname{mon}}\to \mathbf{C}$. Equivalently,
a semicosimplicial  object $A^\Delta$ is a diagram in
$\mathbf{C}$:
 \[
\xymatrix{ {A_0}
\ar@<2pt>[r]\ar@<-2pt>[r] & { A_1}
      \ar@<4pt>[r] \ar[r] \ar@<-4pt>[r] & { A_2}
\ar@<6pt>[r] \ar@<2pt>[r] \ar@<-2pt>[r] \ar@<-6pt>[r]&
\cdots ,}
\]
where each $A_i$ is in $\mathbf{C}$, and, for each $i>0$,
there are $i+1$ morphisms
\[
\partial_{k}\colon {A}_{i-1}\to {A}_{i},
\qquad k=0,\dots,i,
\]
such that $\partial_{l}
\partial_{k}=\partial_{k+1} \partial_{l}$, for any $l\leq k$.\par

Given a  semicosimplicial differential
graded vector space
\[V^{\Delta}:\quad
\xymatrix{ {V_0}
\ar@<2pt>[r]\ar@<-2pt>[r] & { V_1}
      \ar@<4pt>[r] \ar[r] \ar@<-4pt>[r] & { V_2}
\ar@<6pt>[r] \ar@<2pt>[r] \ar@<-2pt>[r] \ar@<-6pt>[r]& \cdots ,}\]
the graded vector space $\bigoplus_{n\ge 0}V_n[-n]$ has two
differentials
\[ d=\sum_{n}(-1)^nd_n,\qquad \text{where}\quad d_n
\text{ is the differential of } V_n,
\]
and
\[
\partial=\sum_{i}(-1)^i\partial_i,\qquad \text{where}
\quad \partial_i\text{ are the coface maps}.\]
More explicitly, if
$v\in V^i_n$, then the degree of $v$ is $i+n$ and
\[ d(v)=(-1)^nd_n(v)\in V^{i+1}_n,\qquad
\partial(v)=\partial_0(v)-\partial_1(v)+\cdots+(-1)^{n+1}
\partial_{n+1}(v)\in V_{n+1}^i.\]
Since $d\partial+\partial d=0$, we define $\tot(V^{\Delta})$ as
the  graded vector space  $\bigoplus_{n\ge 0}V_n[-n]$, endowed
with the differential $d+\partial$.

\begin{example} \label{example. chec Tangen X}

Let $\mathcal{U}=\{U_i\}$ be an affine open cover of a smooth
variety $X$, defined over an algebraically closed field of
characteristic 0;  denote by ${\Theta}_X$ the tangent sheaf  of
$X$. Then,   we can define  the   \v{C}ech semicosimplicial Lie
algebra $\Theta_X(\mathcal{U})$ as the semicosimplicial Lie
algebra
\[
\Theta_X(\mathcal{U}):\quad
\xymatrix{ {\prod_i\Theta_X(U_i)}
\ar@<2pt>[r]\ar@<-2pt>[r] & {
\prod_{i<j}\Theta_X(U_{ij})}
      \ar@<4pt>[r] \ar[r] \ar@<-4pt>[r] &
      {\prod_{i<j<k}\Theta_X(U_{ijk})}
\ar@<6pt>[r] \ar@<2pt>[r] \ar@<-2pt>[r] \ar@<-6pt>[r]& \cdots},
\]
where  the coface maps   $  \displaystyle \partial_{h}\colon
{\prod_{i_0<\cdots <i_{k-1}}\Theta_X(U_{i_0 \cdots  i_{k-1}})}\to
{\prod_{i_0<\cdots <i_k}\Theta_X(U_{i_0 \cdots  i_k})}$
are given by
\[\partial_{h}(x)_{i_0 \ldots i_{k}}={x_{i_0 \ldots
\widehat{i_h} \ldots i_{k}}}_{|U_{i_0 \cdots  i_k}},\qquad
\text{for }h=0,\ldots, k.\]

Since every Lie algebra is, in particular, a differential graded
vector  space (concentrated in degree $0$), it makes sense to
consider the total complex $\tot(\Theta_X(\mathcal{U}))$, which
coincides with the \v{C}ech complex
$\check{C}(\mathcal{U},\Theta_X)$.
\end{example}

\begin{example} \label{example. chec OMEGA X}
Let $\Omega^{\ast}_X$ be the algebraic de Rham complex of a smooth
variety $X$ of dimension $n$:
\[
\Omega_X^{\ast}\colon \qquad 0\to \mathcal{O}_X=
\Omega^0_X\mapor{d}\Omega^1_X\mapor{d}\cdots\mapor{d}\Omega^n_X\to 0.
\]
Given an affine open cover $\mathcal{U}=\{U_i\}$ of $X$,
 we can define a  semicosimplicial differential graded
vector space
\[
\Omega^{\ast}_X(\mathcal{U}):\quad
\xymatrix{ {\prod_i\Omega^{\ast}_X(U_i)}
\ar@<2pt>[r]\ar@<-2pt>[r] & {
\prod_{i<j}\Omega^{\ast}_X(U_{ij})}
      \ar@<4pt>[r] \ar[r] \ar@<-4pt>[r] &
      {\prod_{i<j<k}\Omega^{\ast}_X(U_{ijk})}
\ar@<6pt>[r] \ar@<2pt>[r] \ar@<-2pt>[r] \ar@<-6pt>[r]& \cdots}\; ,
\]
where $\displaystyle \prod_{i_0< \cdots <i_k}
\Omega^{\ast}_X(U_{i_0  \cdots  i_k}) $ is the complex
\[
\prod_{i_0<i_1<\cdots <i_k} \!\!\!
\mathcal{O}_X(U_{i_0\cdots i_k}) \stackrel{d} {\to} \!\!\!
\prod_{i_0<i_1<\cdots <i_k} \!\!\! \Omega^1_X(U_{i_0\cdots
i_k}) \stackrel{d} {\to} \   \cdots \!\!\! \prod_{i_0<\cdots
<i_n} \!\!\! \Omega^n_X (U_{i_0\cdots i_n}).
\]
Here the total complex $\tot( \Omega^{\ast}_X(\mathcal{U}))$ is
the  \v{C}ech complex $\check{C}(\mathcal{U},\Omega^{\ast}_X)$  of
$\Omega^*_X$, with respect to the affine cover $\mathcal{U}$.
\end{example}

\bigskip

Let  $V^{\Delta}$ be a   semicosimplicial differential graded
vector space and
 $(A_{PL})_n$  the differential graded commutative algebra
of polynomial differential forms on the standard $n$-simplex
$\{(t_0,\ldots,t_n)\in \K^{n+1}\mid \sum t_i=1\}$ \cite{FHT}:
\[ (A_{PL})_n=\frac{\K[t_0,\ldots,t_n,dt_0,\ldots,dt_n]}
{(1-\sum t_i,\sum dt_i)}.\] For every $n,m$ the tensor product
$V_n\otimes (A_{PL})_m$ is a differential graded vector space and
then also $\prod_n V_n \otimes (A_{PL})_n$ is a differential
graded vector space.

Denoting by
\[ \delta^{k}\colon (A_{PL})_n\to
(A_{PL})_{n-1},\quad
\delta^{k}(t_i)=\begin{cases}t_i&\text{ if }0\le i<k\\
0&\text{ if }i=k\\
t_{i-1}&\text{ if }k<i\end{cases},\qquad k=0,\dots,n,
\]
the face maps, for every $0\le k\le n$, there are well-defined
morphisms of   differential graded vector spaces
\[
Id\otimes \delta^{k}\colon V_n\otimes (A_{PL})_{n}\to V_n\otimes
(A_{PL})_{n-1},\]
\[\partial_{k}\otimes Id \colon V_{n-1}\otimes
(A_{PL})_{n-1}\to V_{n}\otimes (A_{PL})_{n-1}.
\]
The Thom-Whitney differential graded vector space
$\tot_{TW}(V^\Delta)$ of $V^\Delta$ is the  differential graded
subvector space of $\prod_n V_n \otimes (A_{PL})_n$, whose
elements are the sequences $(x_n)_{n\in\mathbb{N}}$ satisfying the
equations
\[(Id\otimes \delta^{k})x_n=
(\partial_{k}\otimes Id)x_{n-1},\; \text{ for every }\; 0\le k\le
n.
\]


In \cite{whitney}, Whitney noted  that the integration maps
\[ \int_{\Delta^n}\otimes \operatorname{Id}\colon (A_{PL})_{n}\otimes
V_n\to {\mathbb C}[n]\otimes V_n=V_n[n]\] give a quasi-isomorphism
of differential graded vector spaces
\[
I\colon ({\tot}_{TW}(V^\Delta), d_{TW})\to
({\tot}(V^\Delta),d_{\tot}).
\]
Moreover, there exist an explicit injective  quasi-isomorphism of
differential graded vector spaces
\[
E\colon {\tot}(V^\Delta)\to {\tot}_{TW}(V^\Delta)
\]
and an explicit homotopy \[ h\colon {\tot}_{TW}(V^\Delta)\to
{\tot}_{TW}(V^\Delta)[-1],
\]
such that
\[
IE={\rm Id}_{{\tot}(V^\Delta)};\qquad EI-{\rm
Id}_{{\tot}_{TW}(V^\Delta)}=hd_{TW}+d_{TW}h.
\]
Moreover, the morphisms $I,E,h$ are functorial and commute with
morphisms of semicosimplicial differential graded vector spaces.
For more details and explicit description of $E$ and $h$,  we
refer to \cite{dupont1,dupont2,navarro,getzler,scla,chenggetzler}.

\bigskip

Let $V^\Delta$ and $W^\Delta$ be two  semicosimplicial
 graded vector spaces, with degeneracy maps
$\de_{k,n}$  and $\de^{'}_{k,n}$, respectively. Then, we define
the tensor product  $ (V\otimes W)^\Delta$  as the
semicosimplicial graded vector space, such that $(V\otimes
W)^\Delta_n=V_n \otimes W_n$ and the degeneracy maps are defined
on each factor, i.e., $\de_{k,n}=\de^{'}_{k,n}\otimes
\de^{''}_{k,n}:(V\otimes W)^\Delta_{n-1} \to (V\otimes W)^\Delta_n
$.

\begin{remark}\label{remark. tot X tot -- tot}
The Thom-Withney construction is compatible with tensor product,
i.e., there exists a natural transformation
\[
\Phi \colon \tot_{TW} (V^\Delta) \otimes \tot_{TW} (W^\Delta) \to
 \tot_{TW} ((V\otimes W)^\Delta)
\]
Indeed, we have to  prove that $x \otimes y \in
\tot_{TW}((V\otimes W)^\Delta)$, for every pair of  sequences
$x=(x_n)_{n\in\mathbb{N}} \in \tot_{TW}(V^\Delta)$ and
$y=(y_n)_{n\in\mathbb{N}} \in \tot_{TW}(W^\Delta)$.

We have
\[ \partial_{k}\otimes Id(x_n \otimes y_n)=
(\partial_{k}\otimes Id(x_n))\otimes (\partial_{k}\otimes
Id(y_n)),\] and,
\[ Id\otimes\delta^{k}(x_{n+1}\otimes y_{n+1})=
(Id\otimes\delta^{k}(x_{n+1}))\otimes
(Id\otimes\delta^{k}(y_{n+1})).
\]
It is sufficient to observe that the  right parts of the above two
equation are the same for every $k\le n$.

In particular, every bilinear map of semicosimplicial graded
vector spaces $ V^\Delta \times W^\Delta \to Z^\Delta$ induces a
bilinear map $\tot_{TW} (V^\Delta) \times \tot_{TW} (W^\Delta) \to
 \tot_{TW} (Z^\Delta)$.
\end{remark}

\section{Semicosimplicial differential graded Lie algebras and  mapping
cones}
\label{sez.formality map-cone}

Let
 \[\mathfrak{g}^\Delta:\quad
\xymatrix{ {{\mathfrak g}_0} \ar@<2pt>[r]\ar@<-2pt>[r] & {
{\mathfrak g}_1}
      \ar@<4pt>[r] \ar[r] \ar@<-4pt>[r] & { {\mathfrak g}_2}
\ar@<6pt>[r] \ar@<2pt>[r] \ar@<-2pt>[r] \ar@<-6pt>[r]& \cdots ,}
\]
be a \emph{semicosimplicial differential graded Lie algebra}.
Every $\mathfrak{g}_i$ is a DGLA and so, in particular,  a
differential graded vector space, thus we can consider the total
complex $\tot(\mathfrak{g}^\Delta)$.

\begin{example}\label{ex.conisonoscla}
Every morphism $\chi\colon L\to M$ of differential graded Lie
algebras can be interpreted as  the semicosimplicial DGLA
 \[\mathfrak{\chi}^\Delta:\quad
\xymatrix{ {L} \ar@<2pt>[r]\ar@<-2pt>[r] & { M}
      \ar@<4pt>[r] \ar[r] \ar@<-4pt>[r] & 0},\qquad
      \partial_0=\chi,\, \partial_1=0,
\]
and the total complex $\tot(\chi^{\Delta})$ coincides  with the
mapping cone of $\chi$, i.e.,
\[ \tot(\chi^{\Delta})^i=L^i\oplus M^{i-1},\qquad d(l,m)=(dl,\chi(l)-dm).\]

Even in the case of $L$ and $M$ Lie algebras, it is not possible
to define a canonical bracket on the mapping cone, making
$\tot(\chi^{\Delta})$ a DGLA and the projection
$\tot(\chi^{\Delta})\to L$ a morphism of DGLAs. To see this it is
sufficient to consider $L=M$ the Lie subalgebra of $sl(2,\K)$
generated by the matrices
\[ A=\begin{pmatrix}0&1\\ 0&0\end{pmatrix},
\qquad B=\begin{pmatrix}1&0\\ 0&-1\end{pmatrix},\] and $\chi$
equal to the identity. If $\tot(\chi^{\Delta})$ is  a DGLA then,
by functoriality, for every $x\in L$ the subspace generated by
$x,\chi(x)$ is a subalgebra and, for every $\lambda\in\K$, the
linear map $B\mapsto B$, $A\mapsto \lambda A$ is an automorphism
of DGLA. It is an easy exercise to prove that these properties
imply the failure of Jacobi identity.
\end{example}

Even if the complex ${\tot}(\g^{\Delta})$ has no natural DGLA
structure, it can be endowed with a canonical $L_\infty$ structure
by homological perturbation theory \cite{cone,scla}.

Indeed, as in the previous section, we can   consider the
Thom-Whitney construction also for semicosimplicial differential
graded Lie algebras: it is evident that in such case we have
$\tot_{TW}(V^\Delta)$ a differential graded lie algebra. In this
case, the morphisms $I,E,h$ are functorial and commute with
morphisms of semicosimplicial DGLAs. Moreover, these  morphisms
$I,E,h$ can be used to apply the following basic result about
$L_\infty$-algebras, dating back to Kadeishvili's work on the
cohomology of $A_\infty$ algebras \cite{Kad}; see also \cite{HK}.

\begin{theorem}[Homotopy transfer]\label{teo trasfer strutt L infiniti}
Let $(V,q_1,q_2,q_3, \ldots)$ be an $L_\infty$-algebra and $(C,
\delta)$   a differential graded vector space. Assume we have  two
morphisms of complexes
\[
\pi:(V[1],q_1) \to (C[1],\delta_{[1]}), \qquad \imath_1:
(C[1],\delta_{[1]}) \to (V[1],q_1)
\]
and a linear map $h\in \Hom^{-1}(V[1],V[1])$ such that
$hq_1+q_1h=\imath_1\pi-Id$.\par

Then, there exist a canonical $L_\infty$-algebra structure
$(C,\langle\,\rangle_1, \langle\,\rangle_2, \ldots )$ on $C$
extending its differential complex structure, and  a canonical
$L_\infty$-morphism $\imath_\infty\colon (C,\langle\,\rangle_1,
\langle\,\rangle_2, \ldots )\to (V,q_1,q_2,q_3, \ldots)$ extending
$\imath_1$. In particular, if $\imath_1$ is an injective
quasi-isomorphism of complexes, then also $\imath_{\infty}$ is an
injective quasi-isomorphism of $L_{\infty}$-algebras.
\end{theorem}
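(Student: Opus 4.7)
The plan is to apply the Basic Perturbation Lemma to the coalgebra encoding of $L_\infty$ structures. First I would rephrase: an $L_\infty$ structure $(q_1,q_2,\ldots)$ on $V$ is the same datum as a square-zero coderivation $Q$ of degree $+1$ on the reduced cocommutative coalgebra $\mathcal{S}(V):=\bigoplus_{n\geq 1}\bigodot^n V[1]$, and I would split $Q=Q_1+t$, where $Q_1$ extends $q_1$ and $t$ extends the higher brackets $q_k$ with $k\geq 2$. Since the coderivation extending $q_k$ maps $\bigodot^n V[1]$ into $\bigodot^{n-k+1}V[1]$, the operator $t$ strictly lowers the symmetric degree, so any formal power series in $t$ automatically gives a finite sum on each fixed element.

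Next, I would lift the contraction $(\imath_1,\pi,h)$ to the symmetric coalgebras. Extend $\imath_1$ and $\pi$ to coalgebra maps $I_1:\mathcal{S}(C)\to\mathcal{S}(V)$ and $P_1:\mathcal{S}(V)\to\mathcal{S}(C)$ by direct sums of symmetric powers of the chain maps; these automatically commute with the induced codifferentials $Q_1$ on $\mathcal{S}(V)$ and $\Delta_1$ on $\mathcal{S}(C)$. I would then build a homotopy $H$ on $\mathcal{S}(V)$ by the standard \emph{tensor trick}: on $v_1\odot\cdots\odot v_n$, sum with Koszul signs over the choices of a single position at which to apply $h$, inserting $\imath_1\pi$ on the factors to one side and the identity on the factors to the other. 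The key verification here, which is the main combinatorial step of the proof, is the identity $[Q_1,H]=I_1 P_1-\mathrm{Id}_{\mathcal{S}(V)}$, which follows by a telescoping argument from the hypothesis $hq_1+q_1 h=\imath_1\pi-\mathrm{Id}$.

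Finally, I would invoke the Basic Perturbation Lemma with perturbation $t$. This produces a new coderivation
\[Q' \;=\; \Delta_1 + P_1\,t\sum_{k\geq 0}(Ht)^k\,I_1\]
on $\mathcal{S}(C)$ together with a coalgebra map $I_\infty=\sum_{k\geq 0}(Ht)^k I_1:\mathcal{S}(C)\to\mathcal{S}(V)$ commuting with codifferentials, both series being finite on every input by local nilpotency of $t$. Corestricting $Q'$ and $I_\infty$ to $C[1]$ extracts the transferred brackets $\langle\,\rangle_k$ and the components $\imath_k$ of the desired $L_\infty$-morphism $\imath_\infty$. An inspection of symmetric degrees shows that the linear part of $I_\infty$ is exactly $\imath_1$ and that $\langle\,\rangle_1=\delta_{[1]}$, since on a degree-$1$ input every correction $(Ht)^k$ with $k\geq 1$ would require a $t$ applied to an element of symmetric degree $\geq 2$. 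For the final assertion, a quasi-isomorphism of $L_\infty$-algebras is by definition one whose linear part is a quasi-isomorphism, so the quasi-isomorphism property passes from $\imath_1$ to $\imath_\infty$; moreover $I_1$ is injective because symmetric powers preserve injectivity in characteristic zero, and injectivity of $I_\infty$ on all of $\mathcal{S}(C)$ then follows by induction on symmetric degree, using that each $(Ht)^k I_1$ with $k\geq 1$ contributes only to summands of strictly smaller symmetric degree than $I_1$.
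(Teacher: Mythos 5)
Your argument is essentially correct, but it takes a genuinely different route from the paper: the paper does not prove the theorem at all, it delegates to \cite{cone} and to Merkulov's explicit recursive formulas \cite{Merk}, reorganized as summations over rooted trees in \cite{KoSo,KonSoi}. What you give instead is the homological-perturbation-lemma proof in the spirit of Kadeishvili and Huebschmann--Kadeishvili \cite{Kad,HK} (references the paper also points to): encode the $L_\infty$ structure as a codifferential $Q=Q_1+t$ on the reduced symmetric coalgebra, observe that $t$ is locally nilpotent because it strictly lowers word length, lift the contraction by the tensor trick, and perturb. The two approaches are equivalent --- expanding the series $\sum_{k\ge 0}(Ht)^kI_1$ term by term reproduces exactly Merkulov's formulas and their rooted-tree expansion --- so your route buys a self-contained conceptual proof where the paper offers only a pointer, at the cost of one delicate verification that you pass over too quickly: the Basic Perturbation Lemma by itself only produces a perturbed \emph{differential} and chain maps between $\mathcal{S}(C)$ and $\mathcal{S}(V)$; it does not automatically guarantee that $Q'$ is a \emph{coderivation} nor that $I_\infty$ is a morphism of \emph{coalgebras}, which is precisely what is needed to read off an $L_\infty$ structure on $C$ and an $L_\infty$-morphism $\imath_\infty$. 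For this one must first normalize the contraction to satisfy the side conditions $\pi h=0$, $h\imath_1=0$, $h^2=0$ (always achievable by modifying $h$ without disturbing the hypothesis $hq_1+q_1h=\imath_1\pi-\Id$) and then check that the symmetrized tensor-trick homotopy $H$ is compatible with the comultiplication; this is the content of \cite{HK}. With that point supplied, the rest of your argument --- the word-length bookkeeping identifying the linear parts with $\imath_1$ and $\delta_{[1]}$, and the triangularity argument for injectivity of $I_\infty$ --- is correct.
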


\begin{proof}
See \cite{cone} and references therein; explicit formulas for the
quasi-isomorphism $\imath_\infty$ and the brackets
$\langle\,\rangle_n^{}$ have been described by Merkulov in
\cite{Merk}; then, it has   been remarked by Kontsevich and
Soibelman in \cite{KoSo,KonSoi} (see also \cite{fukaya}) that
Merkulov's formulas can be nicely written as summations over
rooted trees.
\end{proof}

\begin{corollary}[{\cite{cone,scla}}]\label{cor.Linfinitosultot}
There exists a canonical $L_\infty$-algebra structure
$\widetilde{\tot}(\mathfrak{g}^{\Delta})$ on the differential
graded vector space ${\tot}(\mathfrak{g}^{\Delta})$, together with
an injective quasi-isomorphism $E_{\infty}\colon
\widetilde{\tot}(\mathfrak{g}^{\Delta})\to
{\tot}_{TW}(\mathfrak{g}^\Delta)$.
\end{corollary}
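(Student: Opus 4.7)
The plan is to apply the homotopy transfer theorem (Theorem~\ref{teo trasfer strutt L infiniti}) directly to the Whitney contraction data between $\tot(\g^\Delta)$ and $\tot_{TW}(\g^\Delta)$. First I would invoke the observation, made in the paragraph preceding the corollary, that $\tot_{TW}(\g^\Delta)$ carries a natural DGLA structure, because each $\g_n$ is a DGLA and the Thom--Whitney construction is compatible with the bilinear bracket (Remark~\ref{remark. tot X tot -- tot}, applied to the bracket maps $\g^\Delta\times\g^\Delta\to\g^\Delta$). Via Example~\ref{oss DGLA is L infinito}, this DGLA is an $L_\infty$-algebra $(\tot_{TW}(\g^\Delta), q_1, q_2, 0, 0, \ldots)$, which is what we want to transfer.

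Next I would identify the contraction data: the Whitney morphisms recalled at the end of Section~\ref{sec.scla} provide the maps
\[
E\colon \tot(\g^\Delta) \to \tot_{TW}(\g^\Delta), \qquad
I\colon \tot_{TW}(\g^\Delta) \to \tot(\g^\Delta),
\]
together with a homotopy $h\colon \tot_{TW}(\g^\Delta)\to \tot_{TW}(\g^\Delta)[-1]$ satisfying $IE = \Id$ and $EI - \Id = h d_{TW} + d_{TW} h$. After the shift $[1]$ needed to match the conventions of Theorem~\ref{teo trasfer strutt L infiniti} (which introduces only the usual sign change $q_1 = -d_{TW}$), these data are exactly those required by the theorem with $V = \tot_{TW}(\g^\Delta)$, $C = \tot(\g^\Delta)$, $\imath_1 = E$ and $\pi = I$.

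I would then invoke Theorem~\ref{teo trasfer strutt L infiniti} to obtain a canonical $L_\infty$-structure $(\langle\,\rangle_1,\langle\,\rangle_2,\ldots)$ on $\tot(\g^\Delta)$, which we denote by $\widetilde{\tot}(\g^\Delta)$, together with a canonical $L_\infty$-morphism $E_\infty\colon \widetilde{\tot}(\g^\Delta)\to \tot_{TW}(\g^\Delta)$ whose linear component is $E$. Since $E$ is an injective quasi-isomorphism of complexes, the last sentence of the theorem gives that $E_\infty$ is an injective quasi-isomorphism of $L_\infty$-algebras, which is the desired conclusion.

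There is no serious obstacle here: the corollary is essentially a direct application of homotopy transfer, and the canonicity of the output is inherited from the functorial and canonical nature of the Whitney triple $(E,I,h)$. The only bookkeeping to check is the matching of sign conventions between $q_1 = -d_{TW}$ on the shifted complex and the relation $EI - \Id = h d_{TW} + d_{TW} h$, which is routine. One might also observe, as a sanity check for later use, that Merkulov's explicit tree-summation formulas (cited in the theorem) allow the transferred brackets $\langle\,\rangle_n$ to be written explicitly in terms of $E$, $I$, $h$, and the bracket of $\tot_{TW}(\g^\Delta)$, confirming that $\widetilde{\tot}(\g^\Delta)$ depends only on the semicosimplicial DGLA $\g^\Delta$.
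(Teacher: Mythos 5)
Your proposal is correct and follows essentially the same route as the paper: the paper's proof likewise consists of applying Theorem~\ref{teo trasfer strutt L infiniti} to the Whitney data $I,E,h$, using that $\tot_{TW}(\g^\Delta)$ is a DGLA, with $E$ as the injective quasi-isomorphism to be extended to $E_\infty$. The only difference is that you spell out the identification of the contraction data and the sign bookkeeping, which the paper leaves implicit.
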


\begin{proof}
It is sufficient to apply Theorem~\ref{teo trasfer strutt L
infiniti} to the morphisms $I,E,h$, in order to define a canonical
$L_\infty$-algebra structure
$\widetilde{\tot}(\mathfrak{g}^{\Delta})$ on
${\tot}(\mathfrak{g}^{\Delta})$, and an injective
quasi-isomorphism $E_{\infty}\colon
\widetilde{\tot}(\mathfrak{g}^{\Delta})\to
{\tot}_{TW}(\mathfrak{g}^\Delta)$ extending $E$. \par

Notice that $E_{\infty}$ induces an isomorphism of functors
$\Def_{\widetilde{\tot}(\mathfrak{g}^{\Delta})}\xrightarrow{\;
\simeq\;} \Def_{{\tot}_{TW}(\mathfrak{g}^{\Delta})}$.
\end{proof}

Let $\chi\colon L\to M$ be a morphism of differential graded Lie
algebras over a field $\K$ of characteristic $0$. We have already
seen, in Example~\ref{ex.conisonoscla},  that $\chi$ can be
interpreted as the semicosimplicial DGLA

 \[\mathfrak{\chi}^\Delta:\quad
\xymatrix{ {L}
\ar@<2pt>[r]^0\ar@<-2pt>[r]_{\chi} & { M}
      \ar@<4pt>[r] \ar[r] \ar@<-4pt>[r] & 0\cdots},\]

and therefore we have a canonical $L_{\infty}$ structure on the
total  complex
\[ \tot(\chi^\Delta)=\mathop{\oplus}_i\tot(\chi^\Delta)^i,
\qquad \tot(\chi^\Delta)^i=
L^i\oplus M^{i-1}.\]%
The brackets
\[
\mu_n\colon\bigwedge^n\tot(\chi^\Delta)\to
\tot(\chi^\Delta)[2-n],\qquad n\ge 1,
\]
have been explicitly described in
\cite{cone}. Namely, one has
\[ \mu_1(l,m)=(dl,\chi(l)-dm),\qquad l\in L, m\in M,\]%
\[ \mu_2((l_1,m_1)\wedge (l_2,m_2))=
\left([l_1,l_2],\frac{1}{2}[m_1,\chi(l_2)]+
\frac{(-1)^{\deg(l_1)}}{2}[\chi(l_1),m_2]\right)\]
and for $n\ge 3$
\[
\mu_n((l_1,m_1)\wedge\cdots\wedge(l_n,m_n))=\pm
\frac{B_{n-1}}{(n-1)!}\sum_{\sigma\in S_n}\varepsilon(\sigma)
[m_{\sigma(1)},[\cdots,[m_{\sigma(n-1)},\chi(l_{\sigma(n)})]\cdots]].
\]
Here the $B_n$'s are the Bernoulli numbers, $\varepsilon$ is
the Koszul sign and we refer to
\cite{cone} for the exact determination of the overall
$\pm$ sign in the  formulas (it will not be needed in
the present paper).

\begin{proposition}\label{prop.criterion}
In the notation above, assume that: \begin{enumerate}

\item $\chi\colon L\to M$ is injective,

\item $\chi\colon H^*(L)\to H^*(M)$ is injective.
\end{enumerate}
Then,   the $L_{\infty}$-algebra $\widetilde{\tot}(\chi^{\Delta})$
is quasi-abelian.
\end{proposition}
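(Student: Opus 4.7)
The strategy is to transfer the $L_\infty$ structure of $\widetilde{\tot}(\chi^\Delta)$ to its cohomology via Theorem \ref{teo trasfer strutt L infiniti} and show that the transferred brackets all vanish. Write $V := \tot(\chi^\Delta) = L \oplus M[-1]$ and let $V_M \subset V$ denote the subspace $\{(0,m) : m \in M\}$.

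First I observe that, under hypotheses (1) and (2), every cohomology class of $V$ admits a cocycle representative in $V_M$. Indeed, if $(l,m)$ is a cocycle then $dl = 0$ and $\chi(l) = dm$, so $[\chi(l)] = 0$ in $H^*(M)$; hypothesis (2) then forces $[l] = 0$ in $H^*(L)$, and writing $l = dl'$ gives
\[
(l,m) - \mu_1(l',0) \;=\; (l,m) - (dl', \chi(l')) \;=\; (0,\, m - \chi(l')) \;\in\; V_M.
\]
I thus fix a vector space section $\iota \colon H^*(V) \to V$ with image in $V_M$, a compatible projection $\pi \colon V \to H^*(V)$ (so $\pi\iota = \Id$), and a homotopy $h \colon V \to V[-1]$ with $\iota\pi - \Id = h\mu_1 + \mu_1 h$ (which exists because $\iota\pi - \Id$ induces the zero map on cohomology). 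Applying Theorem \ref{teo trasfer strutt L infiniti} to $(\iota, \pi, h)$ then produces an $L_\infty$ structure $(\langle\,\rangle_1 = 0, \langle\,\rangle_2, \ldots)$ on $H^*(V)$ together with an $L_\infty$-quasi-isomorphism $\iota_\infty \colon H^*(V) \to \widetilde{\tot}(\chi^\Delta)$; to conclude quasi-abelianity it suffices to show $\langle\,\rangle_n = 0$ for every $n \ge 2$.

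This I would establish by means of Merkulov's rooted-tree formula: $\langle x_1, \ldots, x_n\rangle_n$ is a signed sum over rooted trees with $n$ labelled leaves, each leaf decorated by $\iota(x_j) \in V_M$, each internal vertex of valence $k+1$ carrying the bracket $\mu_k$ of $V$, each internal edge carrying $h$, and the root applying $\pi$. The explicit formulas for the cone brackets recalled after Corollary \ref{cor.Linfinitosultot} show that for $k \ge 2$ every summand in $\mu_k$ contains a factor $\chi(l_i)$, so $\mu_k$ vanishes identically on tuples whose entries all lie in $V_M$. A short induction on tree depth then shows that every internal vertex outputs zero: at the lowest level $\mu_k$ is applied to $V_M$-inputs and vanishes, while higher up it receives at least one input of the form $h(0) = 0$ and vanishes by multilinearity. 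Hence every tree contributes zero, completing the proof.

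The main obstacle is precisely this tree-level vanishing, which requires two independent ingredients to cooperate: hypothesis (2) is needed (and used only there) to land $\iota$ inside $V_M$, while the explicit form of the cone $L_\infty$ brackets is what makes $\mu_k$ vanish on $V_M$ for $k \ge 2$. Hypothesis (1) is essentially cosmetic in the argument — it makes $V = L \oplus M[-1]$ a clean direct sum of subcomplexes — and is presumably more substantive in the intended applications of the proposition than in its proof.
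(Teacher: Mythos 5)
Your proof is correct, but it takes a genuinely different route from the paper's. Both arguments ultimately rest on the same key fact, which you identify precisely: every bracket $\mu_k$, $k\ge 2$, of the cone contains a factor $\chi(l_i)$ and therefore vanishes on tuples lying in $V_M=\{(0,m)\}$. The paper exploits this at the chain level rather than on cohomology: hypotheses (1) and (2) give a surjection $Z^0(\Hom^*(M,L))\to Z^0(\Hom^*(L,L))$, hence a retraction $\pi\colon M\to L$ of $\chi$ as morphisms of complexes and a splitting $M=\chi(L)\oplus V$ with $V=\ker\pi$; the inclusion $V[-1]\hookrightarrow \tot(\chi^{\Delta})$, $v\mapsto(0,v)$, is then an injective quasi-isomorphism, and composing with an embedding of $H^*(V[-1])$ (equipped with the zero differential) yields a \emph{linear} $L_\infty$ quasi-isomorphism from an abelian algebra directly into $\widetilde{\tot}(\chi^{\Delta})$ --- no homotopy transfer and no tree combinatorics are needed. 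You instead push everything to cohomology: hypothesis (2) alone lets you choose cocycle representatives inside $V_M$, and the Merkulov tree formula kills every transferred bracket because each tree has a minimal internal vertex fed only by $V_M$-decorated leaves, and zeros then propagate upward through $h$. Your route is slightly heavier --- it invokes the rooted-tree description of the transferred structure and the existence of the transfer data $(\iota,\pi,h)$, which deserves the one-line justification via the standard Hodge-type decomposition of complexes over a field --- but it buys a genuine refinement that you correctly observe: hypothesis (1) is never used in your argument, whereas the paper needs it both to make $\phi\mapsto\phi\chi$ surjective when constructing the retraction and to identify the cone with $\coker\chi$.
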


\begin{proof}
Since $H^0(\Hom^*(L,L))=\Hom^0(H^*(L),H^*(L))$ and
$H^0(\Hom^*(M,L))=\Hom^0(H^*(M),H^*(L))$, the surjective morphism
\[ \Hom^*(M,L)\to \Hom^*(L,L),\qquad \phi \mapsto \phi \chi, \]
induces  surjective maps
\[
H^0(\Hom^*(M,L))\to H^0(\Hom^*(L,L)),\qquad
Z^0(\Hom^*(M,L))\to Z^0(\Hom^*(L,L))
\]
and then the identity on  $L$ can be lifted to a morphism
$\pi\colon M\to L$ of differential graded vector spaces. Denoting
$V=\ker(\pi)$, we have a direct sum decomposition of differential
graded vector spaces $M=\chi(L)\oplus V$.

Denote by $d$ the differential on $M$, by assumption $d(V)\subset
V$ and the inclusion $V[-1]\hookrightarrow \tot(\chi^{\Delta})$ is
an injective quasi-isomorphism. Let $H$ be a graded vector space,
endowed with trivial differential, and $g\colon H\to V[-1]$  an
injective morphism of differential graded vector spaces, inducing
an isomorphism $H\mapor{\sim}H^*(V[-1])$. The linear map
\[
f\colon H\to \widetilde{\tot}(\chi^{\Delta})),\qquad f(h)=(0,g(h)),
\]
is annihilated by every bracket of
$\widetilde{\tot}(\chi^{\Delta})$  and then it is an injective
quasi-isomorphism of $L_{\infty}$-algebras.
\end{proof}

\begin{example}\label{exe. marco Hom(U,U) Hom(WW)}
Let $W$ be a differential graded vector space and let $U\subset W$
be a differential graded subspace. Assume that the induced
morphism $H^*(U)\to H^*(W)$ is  injective, then the injective
morphism of DGLAs
\[
\chi\colon\{f\in\Hom^*(W,W)\mid f(U)\subset U\}\to \Hom^*(W,W)
\]
satisfies the hypothesis of Proposition~\ref{prop.criterion}. In
fact, the same argument used above shows that there exists a
direct sum decomposition of differential graded vector spaces
$W=U\oplus V$. Next, consider   the subspace
\[
K=\{f\in \Hom^*(W,W)\mid f(W)\subset V,\; f(V)=0\}.
\]
It is straightforward to check that $K$ is a complementary
subcomplex of the image of $\chi$ inside $\Hom^*(W,W)$.
\end{example}

\begin{remark} Let $W$ be a differential graded vector space and let
$U\subset W$ be a differential graded subspace. It is showed in
\cite{Periods},  that the deformation functor associated with the
morphism of DGLAs
\[
\chi\colon\{f\in\Hom^*(W,W)\mid f(U)\subset U\}\to \Hom^*(W,W),
\]
i.e., the deformation functor associated to the
$L_{\infty}$-algebra $\widetilde{\tot}(\chi^{\Delta})$, has a
natural interpretation as the local structure of the derived
Grassmannian of $W$ at the point $U$. Moreover,
Proposition~\ref{prop.criterion} implies that the derived
Grassmannian of $W$ is smooth at the points corresponding to
subspaces $U$ such that $H^*(U)\to H^*(W)$ is  injective.
\end{remark}

\section{Semicosimplicial Cartan homotopies}\label{sez.cartan
homotopy}

The abstract notion of Cartan homotopy has been introduced  in
\cite{Periods,Generalized fi-ma} as a powerful tool for the construction
of $L_{\infty}$ morphisms.

\begin{definition}\label{def.cartanhomotopy}
Let $L$ and $M$ be two differential graded Lie algebras. A linear map of degree $-1$
\[ \bi\colon L \to M  \]
is called a \emph{Cartan homotopy} if, for every $a,b\in L$, we
have:
\[\bi_{[a,b]}=[\bi_a,d_M\bi_b+\bi_{d_L b}] \qquad \text{and }
 \qquad [\bi_a,\bi_{b}]=0.\]
\end{definition}

For every Cartan homotopy $\bi$, it is convenient to consider the
map
\[ \bl\colon L\to M,\qquad
\bl_a=d_M\bi_a+\bi_{d_L a}.
\]
It is straightforward to check that $\bl$ is a morphism of DGLAs
and the conditions of Definition~\ref{def.cartanhomotopy} become
\[\bi_{[a,b]}=[\bi_a,\bl_b] \qquad \text{and } \qquad [\bi_a,\bi_{b}]=0.\]
As a morphism of complexes, $\bl$ is homotopic to 0 (with homotopy
$\bi$).

\begin{example}\label{exam.cartan su ogni aperto}
Let $X$ be a smooth algebraic  variety, $\Theta_X$ the tangent
sheaf and  $(\Omega^{\ast}_X,d)$ the algebraic de Rham complex.
Then, for every open subset $U\subset X$,  the contraction
\[ \Theta_X(U)\otimes\Omega^k_X(U)\xrightarrow{\quad\contr\quad}
\Omega^{k-1}_X(U)\]
induces a linear map of degree $-1$
\[
\bi\colon \Theta_X(U)\to \Hom^*(\Omega^{*}_X(U),
\Omega^{*}_X(U)),\qquad \bi_{\xi}(\omega)=\xi\contr\omega
\]
that  is a Cartan homotopy. In fact, the differential of
$\Theta_X(U)$ is trivial and the differential on the DGLA
$\Hom^{*}(\Omega^{*}_X(U), \Omega^{*}_X(U))$ is $\phi\mapsto
[d,\phi]=d\phi-(-1)^{\deg(\phi)}\phi d$. Therefore, since $\bi_a$
has degree $-1$, we have that $\bl_a=d \bi_a+\bi_ad$ is the Lie
derivative and the above conditions reduce to the classical
Cartan's homotopy formulas:
\begin{enumerate}
\item  $[\bi_a,\bi_b]=0$;
\item $\bi_{[a,b]}=\bl_a \bi_b-\bi_b\bl_a=[\bl_a,\bi_b]=[\bi_a,\bl_b]$,
\end{enumerate}
\end{example}

The relation between Cartan's homotopy and $L_\infty$ algebras is
given by the following theorem.

\begin{theorem}[{\cite[Corollary~3.7]{Generalized fi-ma}}]
\label{thm.FM cartan --- morfismo} Let $\chi\colon N\to M$ be a
morphism of DGLAs and $\bi \colon L\to M$ be a Cartan
homotopy. Assume that $\phi\colon L\to N$ is a morphism of DGLAs,
such that $\chi\phi=\bl=d_M\bi+\bi d_L$. Then, the linear map
\[ \Phi \colon L \longrightarrow \widetilde{\tot}(\chi^{\Delta}) \qquad
\Phi(a)=(\phi(a),\bi_a)
\]
is a linear $L_\infty$-morphism. In particular, if $N$ is a
subalgebra of $M$ containing  $\bl(L)$ and $\chi$ is the
inclusion, then the map
\[ L \longrightarrow \widetilde{\tot}(\chi^{\Delta}) \qquad
a\mapsto (\bl_a,\bi_a)
\]
is a linear $L_\infty$-morphism.
\end{theorem}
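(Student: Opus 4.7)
The plan is to verify directly that the linear map $\Phi(a)=(\phi(a),\bi_a)$ intertwines the $L_\infty$-brackets of $L$ (viewed as an $L_\infty$-algebra via Example~\ref{oss DGLA is L infinito}) with the brackets $\mu_n$ on $\widetilde{\tot}(\chi^\Delta)$ recalled in the previous display. Since a linear $L_\infty$-morphism is nothing more than a degree-$0$ map commuting with every bracket, and since the source DGLA has $q_k=0$ for $k\ge 3$, the task splits naturally into three checks.

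First, I would verify the compatibility with $\mu_1$. Computing
\[
\mu_1(\phi(a),\bi_a)=(d\phi(a),\,\chi\phi(a)-d\bi_a),
\]
and using that $\phi$ commutes with differentials, that $\chi\phi=\bl$, and that by definition of a Cartan homotopy $\bl_a=d\bi_a+\bi_{d_L a}$, one finds $\mu_1(\Phi(a))=(\phi(d_L a),\bi_{d_L a})=\Phi(d_L a)$, as required.

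Second, I would check the $\mu_2$ condition. Substituting $\Phi(a),\Phi(b)$ into the explicit formula and using $\chi\phi=\bl$ together with the Cartan identity $\bi_{[a,b]}=[\bi_a,\bl_b]$ (and its graded-symmetric version obtained from the graded antisymmetry of the bracket in $L$), one rewrites
\[
\tfrac{1}{2}[\bi_a,\bl_b]+\tfrac{(-1)^{\deg a}}{2}[\bl_a,\bi_b]
\]
in terms of $\bi_{[a,b]}$ and matches it, with the correct Koszul sign, against the required $\Phi$ of the $q_2$-bracket on $L$. This is a routine bookkeeping computation with the sign conventions of Example~\ref{oss DGLA is L infinito}.

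Third, and this is the conceptual heart of the argument, I would show that $\mu_n(\Phi(a_1)\wedge\cdots\wedge\Phi(a_n))=0$ for every $n\ge 3$; since $q_n=0$ in $L$ for $n\ge 3$, this is exactly what is needed. The formula for $\mu_n$ with $n\ge 3$ is a constant multiple of sums of iterated brackets of the form
\[
[\bi_{a_{\sigma(1)}},[\,\cdots,[\bi_{a_{\sigma(n-2)}},[\bi_{a_{\sigma(n-1)}},\chi\phi(a_{\sigma(n)})]]\cdots]].
\]
Replacing $\chi\phi$ by $\bl$, the innermost bracket is $[\bi_{a_{\sigma(n-1)}},\bl_{a_{\sigma(n)}}]=\bi_{[a_{\sigma(n-1)},a_{\sigma(n)}]}$ by the Cartan identity; hence the next bracket has the form $[\bi_{a_{\sigma(n-2)}},\bi_{*}]$, which vanishes by the second Cartan identity $[\bi_a,\bi_b]=0$. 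So each summand is zero, and $\mu_n\circ\Phi^{\wedge n}=0$ for all $n\ge 3$.

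The main obstacle I anticipate is the bookkeeping of Koszul signs in the $\mu_2$ check and the conversion between the DGLA conventions (bracket $[\,,\,]$, differential $d$) and the $L_\infty$ conventions on the shifted space (where $q_1=-d$ and $q_2$ carries the sign $(-1)^{\deg v}$); however, this is purely mechanical once the identities $\chi\phi=\bl$, $\bi_{[a,b]}=[\bi_a,\bl_b]$ and $[\bi_a,\bi_b]=0$ are in hand. The final assertion, that when $N\subset M$ and $\chi$ is the inclusion the map $a\mapsto(\bl_a,\bi_a)$ is an $L_\infty$-morphism, is then a direct specialization with $\phi=\bl$.
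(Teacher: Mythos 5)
Your proposal is correct and takes essentially the same route as the paper, whose proof consists of the single remark that the statement is a straightforward consequence of the explicit formulas for $\mu_1,\mu_2,\mu_n$ on $\widetilde{\tot}(\chi^{\Delta})$; you have simply carried out the three verifications (the $\mu_1$ and $\mu_2$ compatibilities via $\chi\phi=\bl$ and $\bi_{[a,b]}=[\bi_a,\bl_b]$, and the vanishing of $\mu_n\circ\Phi^{\wedge n}$ for $n\ge 3$ via $[\bi_a,\bi_b]=0$) that the paper leaves implicit. The key observation that the innermost bracket in $\mu_n$ becomes $\bi_{[a,b]}$ and is then killed by the next $\bi$ is exactly the intended mechanism.
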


\begin{proof}
Straightforward consequence of the explicit description of the
$L_{\infty}$ structure of $\widetilde{\tot}(\chi^{\Delta})$.
\end{proof}

\begin{remark}\label{oss homotopy dopo morphi is homotopy}
It is plain from definition that Cartan homotopies are stable
under composition with morphisms of DGLAs. More precisely, if
$f\colon L'\to L$ and $g\colon M\to M'$ are morphisms of
differential graded Lie algebras and $\bi\colon L\to M$ is a
Cartan homotopy, then also $g\bi f\colon L'\to M'$ is a  Cartan
homotopy.
\end{remark}

\begin{lemma}\label{lem homotopy implica homotopyX A}
Let $\bi\colon L\to M$ be a Cartan homotopy and $A$ be a
differential graded commutative algebra.  Then the map
\begin{align*}
\bi\otimes \Id\colon & L\otimes A\to M\otimes A\\
 &(x \otimes a)\mapsto\bi_x \otimes a
\end{align*}
is a Cartan homotopy.
\end{lemma}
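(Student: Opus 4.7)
The plan is to verify the two defining conditions of Definition~\ref{def.cartanhomotopy} for $\bi' := \bi\otimes\Id$ by direct computation with the natural DGLA structures on $L\otimes A$ and $M\otimes A$, using graded commutativity of $A$ together with the Koszul sign rule. The only real content is bookkeeping: $\bi_x$ has degree $\deg(x)-1$ while $\bl_y$ has degree $\deg(y)$, and this asymmetry is exactly what makes the mixed signs cancel.

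First I would compute the induced Lie derivative $\bl'_{x\otimes a}:=d_{M\otimes A}(\bi_x\otimes a)+\bi'_{d_{L\otimes A}(x\otimes a)}$. Expanding the tensor differentials via Leibniz gives
\[
d_{M\otimes A}(\bi_x\otimes a)=d_M\bi_x\otimes a+(-1)^{\deg(x)-1}\bi_x\otimes d_Aa,
\]
\[
\bi'_{d_{L\otimes A}(x\otimes a)}=\bi_{d_Lx}\otimes a+(-1)^{\deg(x)}\bi_x\otimes d_Aa,
\]
and the two mixed terms cancel, yielding the clean identity $\bl'_{x\otimes a}=\bl_x\otimes a$, i.e.\ $\bl'=\bl\otimes\Id$. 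This is a morphism of DGLAs because $\bl$ is.

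Next I would check the vanishing condition $[\bi'_{x\otimes a},\bi'_{y\otimes b}]=0$: by the graded bracket on $M\otimes A$,
\[
[\bi_x\otimes a,\bi_y\otimes b]=(-1)^{\deg(a)\deg(\bi_y)}[\bi_x,\bi_y]\otimes ab,
\]
which vanishes since $[\bi_x,\bi_y]=0$. Finally, for the Cartan formula $\bi'_{[-,-]}=[\bi'_{-},\bl'_{-}]$, on elementary tensors the left-hand side is
\[
\bi'_{[x\otimes a,y\otimes b]}=(-1)^{\deg(a)\deg(y)}\bi_{[x,y]}\otimes ab=(-1)^{\deg(a)\deg(y)}[\bi_x,\bl_y]\otimes ab,
\]
using the Cartan relation for $\bi$. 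The right-hand side $[\bi_x\otimes a,\bl_y\otimes b]$ contributes the Koszul sign $(-1)^{\deg(a)\deg(\bl_y)}=(-1)^{\deg(a)\deg(y)}$ (since $\bl_y$ has the same degree as $y$), giving the same expression.

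The only potential obstacle is sign tracking, and it resolves itself as soon as one notes that $\bi_x$ carries the shift $-1$ but $\bl_y$ does not. Extension from elementary tensors to arbitrary elements of $L\otimes A$ and $M\otimes A$ is by bilinearity.
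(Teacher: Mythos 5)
Your proposal is correct and follows essentially the same route as the paper's proof: the same Leibniz-rule cancellation giving $\tilde{\bl}_{x\otimes a}=\bl_x\otimes a$, the same Koszul-sign computation for $[\bi_x\otimes a,\bi_y\otimes b]$, and the same verification of the Cartan identity via $\bi_{[x,y]}=[\bi_x,\bl_y]$. The only difference is the order in which the two defining conditions are checked, which is immaterial.
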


\begin{proof}
By definition, for any $ x\otimes a$ and $y\otimes b \in L\otimes
A$, we have
\[
[(\bi\otimes \Id)_{x\otimes a},(\bi\otimes \Id)_{y\otimes b}]=
[\bi_x \otimes a ,\bi_y \otimes b ]=(-1)^{\deg(a)
(\deg(y)-1)}[\bi_x,\bi_y ]\otimes ab=0.
\]
Moreover, denoting $\bl=d_M\bi+\bi d_L$ and
$\tilde{\bl}=d_{M\otimes A}(\bi\otimes Id) +(\bi\otimes Id)
d_{L\otimes A}$ we have
\[ \tilde{\bl}_{x\otimes a}=
d_{M\otimes A}(\bi_x\otimes a)+(\bi\otimes Id)(d_{L}x\otimes
a+(-1)^{\deg(x)}x\otimes d_Aa)= \]
\[= d_M\bi_x\otimes a-(-1)^{\deg(x)}x\otimes
 d_Aa+\bi_{d_Lx}\otimes a +(-1)^{\deg(x)}x\otimes d_Aa
=\bl_{x}\otimes a.\] Thus, for any $ x\otimes a$ and $y\otimes b
\in L\otimes A$, we get
\[
(\bi\otimes \Id)_{[x\otimes a,y\otimes b]}=
(\bi\otimes \Id)_{(-1)^{\deg(a)\deg(y)}[x,y]\otimes ab}=
(-1)^{\deg(a)\deg(y)}\bi_{[x,y]}
\otimes ab=\]
\[
(-1)^{\deg(a)\deg(y)}[\bi_x,\bl_y]\otimes ab=[\bi_x\otimes
a,\bl_y\otimes b]=
[(\bi\otimes \Id)_{x\otimes a},\tilde{\bl}_{y\otimes b}].
\]
\end{proof}

\begin{definition}\label{def.contraction}
 Let $L$ be a differential graded Lie algebra and $V$ a differential
 graded vector space. A bilinear map
\[ L\times V\xrightarrow{\quad\contr\quad} V\]
of degree $-1$ is called a \emph{contraction} if the induced map
\[ \bi\colon L\to \Hom^*(V,V),\qquad \bi_l(v)=l\contr v,\]
is a Cartan Homotopy.\end{definition}

The notion of contraction is stable under scalar extensions, more
precisely:

\begin{lemma}\label{lem. cartan implica cartan XA}

Let $V$ be  a differential graded vector space  and
\[
L\times V\xrightarrow{\quad\contr\quad} V
\]
a contraction. Then, for every differential graded commutative
algebra $A$, the natural extension
\[
(L\otimes A)\times (V\otimes A) \xrightarrow{\quad\contr\quad}
(V\otimes A) \qquad (l\otimes a)\contr (v\otimes
b)=(-1)^{\deg(a)\deg(v)}l \contr v\otimes ab,
\]
is a contraction.



\end{lemma}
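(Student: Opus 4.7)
The plan is to reduce this to the previous Lemma (\emph{lem homotopy implica homotopyX A}) together with Remark \emph{oss homotopy dopo morphi is homotopy}, by factoring the induced map through $\Hom^*(V,V)\otimes A$.

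By Definition~\ref{def.contraction}, the original contraction is encoded in a Cartan homotopy $\bi\colon L\to \Hom^*(V,V)$, $\bi_l(v)=l\contr v$. Applying the previous lemma, the map
\[
\bi\otimes\Id\colon L\otimes A\longrightarrow \Hom^*(V,V)\otimes A,\qquad l\otimes a\mapsto \bi_l\otimes a,
\]
is a Cartan homotopy. Next, I would introduce the natural evaluation map
\[
\mu\colon \Hom^*(V,V)\otimes A\longrightarrow \Hom^*(V\otimes A,V\otimes A),\qquad \mu(\phi\otimes a)(v\otimes b)=(-1)^{\deg(a)\deg(v)}\phi(v)\otimes ab,
\]
and verify, by a standard Koszul sign computation, that $\mu$ is a morphism of differential graded Lie algebras (it is straightforward to check that $\mu$ commutes with differentials and that the bracket on $\Hom^*(V,V)\otimes A$ inherited from the tensor product of DGLA and DGCA goes over to the graded commutator on $\Hom^*(V\otimes A,V\otimes A)$).

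Then, by Remark~\ref{oss homotopy dopo morphi is homotopy}, the composition
\[
\mu\circ(\bi\otimes\Id)\colon L\otimes A\longrightarrow \Hom^*(V\otimes A,V\otimes A)
\]
is again a Cartan homotopy. A direct evaluation gives
\[
\mu\circ(\bi\otimes\Id)(l\otimes a)(v\otimes b)=(-1)^{\deg(a)\deg(v)}\bi_l(v)\otimes ab=(-1)^{\deg(a)\deg(v)}(l\contr v)\otimes ab,
\]
which is exactly the linear map associated to the extension stated in the lemma. By Definition~\ref{def.contraction}, this proves that the extended bilinear map is a contraction.

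The only mildly delicate point is step two: establishing that $\mu$ is a strict morphism of DGLAs requires tracking the Koszul signs coming from commuting elements of $A$ past elements of $V$. Once that bookkeeping is done correctly, the remaining assertions are formal consequences of results already proved. In particular, no new Cartan-type identities need to be checked by hand on $L\otimes A$; they are inherited through the composition.
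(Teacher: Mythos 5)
Your proposal is correct and follows exactly the paper's argument: factor the induced map as $\mu\circ(\bi\otimes\Id)$, invoke Lemma~\ref{lem homotopy implica homotopyX A} and Remark~\ref{oss homotopy dopo morphi is homotopy}, and reduce everything to checking that the evaluation map $\mu\colon \Hom^*(V,V)\otimes A\to \Hom^*(V\otimes A,V\otimes A)$ is a morphism of DGLAs, which the paper likewise leaves to the reader as a straightforward sign computation.
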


\begin{proof}
According to Remark~\ref{oss homotopy dopo morphi is homotopy},
Lemma~\ref{lem homotopy implica homotopyX A} and Definition
\ref{def.contraction}, it is sufficient to prove that  the natural
map
\[ \alpha\colon \Hom^*(V,V)\otimes A\to \Hom^*(V\otimes A,V\otimes A),
\quad \alpha(\phi \otimes a)(v \otimes
b)=(-1)^{\deg(a)\deg(v)}\phi(v) \otimes ab,\] is a morphism of
DGLAs. This is completely straightforward  and it is left to the
reader.
\end{proof}

The notions of Cartan homotopy and contraction extend naturally to
the semicosimplicial setting. Here, we consider only the case of
contractions.

\begin{definition}\label{def.contractioncosimpl}
Let $\mathfrak{g}^\Delta$ be a semicosimplicial DGLA and
$V^\Delta$ a semicosimplicial differential graded vector space.
A semicosimplicial contraction
\[ \mathfrak{g}^\Delta\times V^\Delta\xrightarrow{\;\contr\;} V^\Delta,\]
is a sequence of contractions $\mathfrak{g}_n\times V_n\xrightarrow{\;
 \contr\;} V_n$, $n\ge 0$,
commuting with coface maps, i.e., $\de_k(l\contr v)=\de_k(l)\contr
\de_k(v)$, for every $k$.
\end{definition}

\begin{proposition}\label{prop.TWforcontractions}
Every semicosimplicial contraction
\[ \mathfrak{g}^\Delta\times V^\Delta\xrightarrow{\;\contr\;} V^\Delta\]
extends  naturally to a contraction
\[ \tot_{TW}(\mathfrak{g}^\Delta)\times \tot_{TW}(V^\Delta)
\xrightarrow{\;\contr\;} \tot_{TW}(V^\Delta).\]
\end{proposition}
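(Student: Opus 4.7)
The plan is to build the extended contraction componentwise and then check that it restricts to the Thom–Whitney subspace. First, for each $n\ge 0$, I apply Lemma~\ref{lem. cartan implica cartan XA} with the DGCA $(A_{PL})_n$ to the given contraction $\mathfrak{g}_n \times V_n \to V_n$. This produces a contraction
\[
(\mathfrak{g}_n \otimes (A_{PL})_n) \times (V_n \otimes (A_{PL})_n) \xrightarrow{\;\contr\;} V_n \otimes (A_{PL})_n,\qquad (x\otimes a)\contr(v\otimes b)=(-1)^{\deg(a)\deg(v)}(x\contr v)\otimes ab.
\]
Taking the product over $n$ yields a contraction on $\prod_n (\mathfrak{g}_n \otimes (A_{PL})_n) \times \prod_n (V_n \otimes (A_{PL})_n)$, since the Cartan identities are imposed factor by factor.

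Second, I would show that the componentwise formula $(x\contr y)_n := x_n \contr y_n$ sends $\tot_{TW}(\mathfrak{g}^\Delta) \times \tot_{TW}(V^\Delta)$ into $\tot_{TW}(V^\Delta)$. The required Thom–Whitney compatibility comes from two observations, completely parallel to Remark~\ref{remark. tot X tot -- tot}. On one hand, $\Id\otimes \delta^k$ acts only on the $(A_{PL})_n$ factor, and since the contraction is $(A_{PL})_n$-bilinear one has
\[
(\Id \otimes \delta^k)(x_n \contr y_n) = (\Id \otimes \delta^k)(x_n) \contr (\Id \otimes \delta^k)(y_n).
\]
On the other hand, the semicosimplicial assumption $\partial_k(l\contr v) = \partial_k(l)\contr \partial_k(v)$ gives
\[
(\partial_k \otimes \Id)(x_{n-1} \contr y_{n-1}) = (\partial_k \otimes \Id)(x_{n-1}) \contr (\partial_k \otimes \Id)(y_{n-1}).
\]
Combining these two identities with the Thom–Whitney compatibility equations for $x$ and $y$ immediately yields the compatibility equations for $x\contr y$.

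Finally, the Cartan homotopy identities for the restricted pairing follow from those on each factor: $\tot_{TW}(\mathfrak{g}^\Delta)$ is a sub-DGLA of $\prod_n (\mathfrak{g}_n \otimes (A_{PL})_n)$ and $\tot_{TW}(V^\Delta)$ is a subcomplex of $\prod_n (V_n \otimes (A_{PL})_n)$, so the identities $[\bi_\alpha,\bi_\beta]=0$ and $\bi_{[\alpha,\beta]} = [\bi_\alpha,\bl_\beta]$ descend to the sub-objects just by evaluating on sequences lying in $\tot_{TW}$. Equivalently, the induced $\bi\colon \tot_{TW}(\mathfrak{g}^\Delta) \to \Hom^*(\tot_{TW}(V^\Delta),\tot_{TW}(V^\Delta))$ is the restriction of a Cartan homotopy and is therefore itself a Cartan homotopy.

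The main obstacle is not conceptual but notational: it is the careful bookkeeping of Koszul signs and the two compatibility identities needed to verify that the componentwise contraction lands in the Thom–Whitney subspace. Both the sign bookkeeping and the transfer of the Cartan identities from the product to the subspace are, however, already packaged in Lemma~\ref{lem. cartan implica cartan XA} and Remark~\ref{remark. tot X tot -- tot}, so the proof reduces to assembling these pieces.
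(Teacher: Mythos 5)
Your proposal is correct and follows essentially the same route as the paper's own proof: apply Lemma~\ref{lem. cartan implica cartan XA} levelwise with the algebras $(A_{PL})_n$, and then use the compatibility of the componentwise pairing with the maps $\Id\otimes\delta^k$ and $\partial_k\otimes\Id$ (Remark~\ref{remark. tot X tot -- tot}) to see that it preserves the Thom--Whitney subspaces, where the Cartan identities are inherited by restriction. You merely spell out the two compatibility identities more explicitly than the paper does.
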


\begin{proof}
By definition, for every $n$, we have a Cartan homotopy
\[
\bi: \mathfrak{g}_n \to \Hom^*(V_n , V_n),
\]
and so, by  Lemma~\ref{lem. cartan implica cartan XA}, a Cartan
homotopy
\[
\bi\colon \mathfrak{g}_n \otimes (A_{PL})_n \to \Hom^*(V_n\otimes
(A_{PL})_n ,V_n\otimes (A_{PL})_n).
 \]
Therefore, it is enough to prove that $\bi_{x}(y) \in
\tot_{TW}(V^\Delta)$, for every pair of  sequences
$x=(x_n)_{n\in\mathbb{N}} \in \tot_{TW}({\mathfrak g}^\Delta)$ and
$y=(y_n)_{n\in\mathbb{N}} \in \tot_{TW}(V^\Delta)$; it follows
from Remark \ref{remark. tot X tot -- tot}.

\end{proof}

\bigskip
\section{Semicosimplicial Lie algebras  and deformations
of smooth varieties}\label{sezione.deformazioni varieta}

Let ${\g}^\Delta$ be a  semicosimplicial Lie algebra, then we can
apply the construction of Section~\ref{sez.formality map-cone} in
order to construct the Thom-Whitney DGLA
${\tot}_{TW}(\g^{\Delta})$, the $L_{\infty}$-algebra
$\widetilde{\tot}(\g^{\Delta})$ and their associated (and
isomorphic) deformation functors
$\Def_{\widetilde{\tot}(\g^{\Delta})}\simeq
\Def_{{\tot}_{TW}(\g^{\Delta})}$.

Beyond this way, there is another natural, and more geometric, way
to  define a deformation functor, see \cite[Definitions~1.4 and
1.6]{Pridham} and \cite[Section~3]{scla}. More precisely, if ${
\g}^\Delta$ is a semicosimplicial Lie algebra, then we denote
\[
 Z^1_{sc}( \exp { \g}^\Delta ) \colon \Art \to \Set
\]
  as
\[
Z ^1_{sc}(\exp { \g}^\Delta )(A)=\{ x \in {\g}_1 \otimes
\mathfrak{m}_A   \ |\  e^{\de_{0}(x)}e^{-\de_{1}(x)}e^{\de_{2}(x)}=1
\},
\]
and
\[
 H^1_{sc}( \exp { \g}^\Delta) \colon \Art \to \Set
\]
such that
\[
H^1_{sc}( \exp { \g}^\Delta)(A)= Z^1_{sc}(\exp { \g}^\Delta)(A)/ \sim,
\]
where $x \sim y$ if and only if there exists
$a\in {\g}_0\otimes\mathfrak{m}_A$, such that
$e^{-\de_{1}(a)}e^{x}e^{\de_{0}(a)}=e^y$.

\begin{example} Let $\mathcal L$ be a sheaf of Lie
algebras on a paracompact topological space $X$, and $\mathcal U$
an open covering of $X$; it is naturally defined the \v{C}ech
semicosimplicial Lie algebra $\mathcal L(\mathcal U)$
\[ \mathcal{L}(\mathcal{U}):\qquad
\xymatrix{ {\prod_i\mathcal{L}(U_i)}
\ar@<2pt>[r]\ar@<-2pt>[r] & {
\prod_{i<j}\mathcal{L}(U_{ij})}
      \ar@<4pt>[r] \ar[r] \ar@<-4pt>[r] &
      {\prod_{i<j<k}\mathcal{L}(U_{ijk})}
\ar@<6pt>[r] \ar@<2pt>[r] \ar@<-2pt>[r] \ar@<-6pt>[r]& \cdots},
\]
and, for every $A\in\mathbf{Art}$, the set $H^1_{sc}( \exp
{\mathcal{L}(\mathcal{U})})(A)$ is exactly the cohomology set
$H^1(\mathcal{U},\exp(\mathcal{L}\otimes\mathfrak{m}_A))$
\cite{hirzebruch}.
\end{example}

The relation between the above functors is given by the  following theorem.

\begin{theorem}\label{teor.H^1=Def TOT}
Let  ${ \g}^\Delta$ be  a semicosimplicial Lie algebra. Then, for
every $A\in\mathbf{Art}$,
\[
\MC_{\widetilde{\tot}(\g^\Delta)}(A) =Z^1_{sc}(\exp\g^\Delta)(A),
\]
as subsets of ${\g}_1 \otimes \mathfrak{m}_A$. Moreover, we have
natural isomorphisms of deformation functors
\[
\Def_{{\tot}_{TW}(\g^{\Delta})} \simeq \Def_{\widetilde{\tot}(\g^\Delta)}
\xrightarrow{\sim} H^1_{sc}(\exp\g^\Delta),
\]
\end{theorem}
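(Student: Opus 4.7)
\medskip

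\noindent\emph{Proof plan.}

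The second isomorphism $\Def_{\tot_{TW}(\g^{\Delta})} \simeq \Def_{\widetilde{\tot}(\g^{\Delta})}$ is essentially free: the map $E_{\infty}$ provided by Corollary~\ref{cor.Linfinitosultot} is an $L_{\infty}$-quasi-isomorphism, hence induces an isomorphism of deformation functors by the general principle recalled in the introduction. So the content of the statement is (a)~the set-theoretic equality $\MC_{\widetilde{\tot}(\g^{\Delta})}(A) = Z^1_{sc}(\exp\g^{\Delta})(A)$ inside $\g_1 \otimes \mathfrak{m}_A$, and (b)~the compatibility of gauge equivalence with the semicosimplicial equivalence defining $H^1_{sc}$.

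For (a), the first observation is that since each $\g_n$ is a Lie algebra (concentrated in degree $0$), the total complex satisfies $\tot(\g^{\Delta})^1 = \g_1$, so MC elements of $\widetilde{\tot}(\g^{\Delta})$ automatically live in $\g_1 \otimes \mathfrak{m}_A$. Given $x \in \g_1 \otimes \mathfrak{m}_A$, I would use that $E_{\infty}$ is an $L_{\infty}$-morphism to transfer the MC equation: $x$ is MC in $\widetilde{\tot}(\g^{\Delta})$ if and only if
\[
\widetilde{x} := \sum_{n \geq 1} \frac{E_n(x^{\odot n})}{n!} \in \tot_{TW}(\g^{\Delta})^1 \otimes \mathfrak{m}_A
\]
satisfies $d\widetilde{x} + \tfrac{1}{2}[\widetilde{x},\widetilde{x}] = 0$. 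Using Dupont's explicit formula for $E$ in terms of Whitney elementary forms, one computes $\widetilde{x}$ component-wise on each simplex: the equation is trivial on $\Delta^0$ (for degree reasons) and on $\Delta^1$, and the entire content is concentrated on $\Delta^2$. There, by integrating the MC equation along the $2$-simplex (equivalently, by interpreting it as flatness of a connection on a nilpotent principal bundle), the holonomy around $\partial\Delta^2$ becomes, via the Baker--Campbell--Hausdorff formula, the group-theoretic product $e^{\partial_0 x} e^{-\partial_1 x} e^{\partial_2 x}$; vanishing of the MC curvature is then equivalent to this product being $1$. The contributions on $\Delta^n$ for $n \geq 3$ are formal consequences of the $n=2$ case.

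For (b), the same machinery applied in degree $0$ matches the gauge equivalences. Gauge equivalence in $\widetilde{\tot}(\g^{\Delta})$ uses $\exp(\tot(\g^{\Delta})^0 \otimes \mathfrak{m}_A) = \exp(\g_0 \otimes \mathfrak{m}_A)$, and under $E_{\infty}$ an element $a \in \g_0 \otimes \mathfrak{m}_A$ is transferred to a gauge element $\widetilde{a}$ on $\tot_{TW}(\g^{\Delta})$. Integrating the resulting gauge action along $\Delta^1$ with the Baker--Campbell--Hausdorff formula identifies the induced relation on $Z^1_{sc}$-representatives with $e^{-\partial_1 a} e^x e^{\partial_0 a} = e^y$, which is exactly the equivalence defining $H^1_{sc}(\exp\g^{\Delta})$.

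The main obstacle is the explicit Dupont/BCH calculation on $\Delta^2$ in step~(a): one must unfold the transferred element $\widetilde{x}$ (including the nonlinear terms coming from the homotopy $h$ and the $L_{\infty}$-transfer summation over trees from Theorem~\ref{teo trasfer strutt L infiniti}) and recognize its curvature as precisely the logarithm of $e^{\partial_0 x} e^{-\partial_1 x} e^{\partial_2 x}$. In practice this is manageable because $\g^{\Delta}$ lives in degree~$0$, which drastically simplifies the tree formulas and reduces the calculation to the parallel-transport interpretation of the Maurer--Cartan equation.
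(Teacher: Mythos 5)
The paper does not actually prove this statement: its ``proof'' is a citation to \cite{scla}, where the result is established by exactly the strategy you outline (homotopy transfer along Dupont's $E,I,h$ plus a Baker--Campbell--Hausdorff identification of the transferred curvature with $\log(e^{\partial_0x}e^{-\partial_1x}e^{\partial_2x})$). So your plan is aimed in the right direction. However, as a proof it has a genuine gap: the entire mathematical content of the set-theoretic equality $\MC_{\widetilde{\tot}(\g^\Delta)}(A)=Z^1_{sc}(\exp\g^\Delta)(A)$ \emph{is} the computation you defer to the last paragraph, namely unwinding the transferred brackets $q_k$ on $\odot^k\g_1[1]$ (the Bernoulli-number operations of Section~\ref{sez.formality map-cone}, landing in $\g_2$) and recognizing $\sum_k q_k(x^{\odot k})/k!$ as the BCH logarithm. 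Naming the computation is not doing it. There is also a logical slip in how you set it up: an $L_\infty$-morphism pushes Maurer--Cartan elements forward, so ``$x$ is MC iff $\widetilde{x}=\sum_n E_n(x^{\odot n})/n!$ is MC'' is not automatic in the ``if'' direction; the clean route is to compute the transferred brackets on $\tot(\g^\Delta)$ directly rather than detouring through $\tot_{TW}$.

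Part (b) is glossed over more seriously. The functor $\Def_{\widetilde{\tot}(\g^\Delta)}$ is defined as MC modulo \emph{homotopy} equivalence (via $\K[s,ds]$), since $\widetilde{\tot}(\g^\Delta)$ is only an $L_\infty$-algebra; there is no gauge action by $\exp(\g_0\otimes\mathfrak{m}_A)$ on it to invoke. If instead you work in the DGLA $\tot_{TW}(\g^\Delta)$, where gauge equivalence does make sense, the degree-zero part is $\{(x_n)\}\subset\prod_n\g_n\otimes(A_{PL})_n^0$, which is much larger than $\g_0\otimes\mathfrak{m}_A$; showing that gauge equivalence under this big group, restricted to cocycles coming from $\g_1\otimes\mathfrak{m}_A$, coincides with the relation $e^{-\partial_1(a)}e^{x}e^{\partial_0(a)}=e^{y}$ for $a\in\g_0\otimes\mathfrak{m}_A$ is a nontrivial step (it is where \cite{scla} does real work), not ``the same machinery applied in degree $0$.'' To complete the argument you would need either that explicit identification of equivalence relations, or an indirect argument comparing the two functors via their tangent and obstruction theories after establishing a natural surjection between them.
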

\begin{proof} For the proof, we refer to
\cite{scla}.
\end{proof}

Next, assume   that $X$ is a smooth  algebraic variety over a
field $\K$ of characteristic $0$, with tangent sheaf $\Theta_X$,
and let $\mathcal{U}=\{U_i\}_{i \in I}$ be an  affine open
covering of $X$.

Since every infinitesimal deformation of a
smooth affine scheme is trivial \cite[Lemma II.1.3]{Sernesi}, every
infinitesimal deformation  $X_A$ of $X$ over $\Spec(A)$ is
obtained by
gluing the trivial deformations $U_i \times \Spec (A)$ along  the
double intersections $U_{ij}$, and therefore it is determined by
the sequence  $\{\theta_{ij}\}_{i<j}$ of   automorphisms of sheaves of $A$-algebras
\[\xymatrix{ &\mathcal{O}(U_{ij}) & \\
\mathcal{O}(U_{ij})\otimes A \ar[rr]^{\theta_{ij}}_{\simeq}\ar[ur] & &
 \mathcal{O}(U_{ij})\otimes A\ar[ul] \\
           & A\ar[ru]\ar[lu] &   \\ }\]
satisfying the cocycle condition
\begin{equation}\label{equa.cociclo auto Uij}
 \theta _{jk}  \theta _{ik}^{-1}
   \theta _{ij}=\Id_{\mathcal{O}(U_{ijk})\otimes A}, \qquad
   \forall \ i<j<k \in I.
\end{equation}
Since we are in characteristic zero, we  can take the logarithms
and write $\theta_{ij}=e^{d_{ij}}$,  where
$d_{ij}\in\Theta_X(U_{ij})\otimes\mathfrak{m}_A$. Therefore, the
Equation (\ref{equa.cociclo auto Uij}) is equivalent to
\[
e^{d_{jk}}e^{-d_{ik}}e^{d_{ij}}=1 \in
 \exp(\Theta_X(U_{ijk})\otimes\mathfrak{m}_A),
\qquad \forall \ i<j<k \in I.
\]

Next, let $X_A'$ be another deformation of $X$ over $\Spec(A)$,
defined by the  cocycle $\theta_{ij}'$. To give  an isomorphism of
deformations  $X_A' \simeq X_A $ is the same to give, for every
$i$, an automorphism $\alpha_i$ of $\mathcal{O}(U_i)\otimes A$
such that $\theta_{ij}= {\alpha_i}^{-1}
{\theta_{ij}'}^{-1}\alpha_j$, for every $i<j$. Taking again
logarithms, we can write $\alpha_i=e^{a_i}$, with $a_i \in
\Theta_X(U_i)\otimes\mathfrak{m}_A$,  and so
$e^{-a_i}e^{d'_{ij}}e^{a_j}=e^{d_{ij}}$.

\begin{theorem}\label{thm.defoXcomedeftot}
Let $\mathcal{U}$ be an affine open cover of a smooth algebraic
variety  $X$ defined over  an algebraically closed  field of
characteristic 0. Denoting by  $\Def_X$ the functor of
infinitesimal deformations of $X$,  there exist isomorphisms of
functors
 \[
\Def_X \cong H^1 _{sc}( \exp \Theta_X(\mathcal{U}))\cong
\Def_{\tot_{TW}(\Theta_X(\mathcal{U}))}
 \cong
\Def_{\widetilde{\tot}(\Theta_X(\mathcal{U}))}\, ,
\]
where $\Theta_X(\mathcal{U})$  is the
semicosimplicial Lie algebra defined  in
Example~\ref{example. chec Tangen X}.
\end{theorem}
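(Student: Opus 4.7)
The plan is to reduce the statement to Theorem \ref{teor.H^1=Def TOT} and then verify by direct unravelling of definitions that $\Def_X\cong H^1_{sc}(\exp\Theta_X(\mathcal{U}))$. The last two isomorphisms are already given by Theorem \ref{teor.H^1=Def TOT} applied to the semicosimplicial Lie algebra $\Theta_X(\mathcal{U})$, so all of the real work sits in the first isomorphism, which is already essentially set up by the discussion preceding the statement.

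First I would define the natural transformation $\Phi\colon\Def_X(A)\to H^1_{sc}(\exp\Theta_X(\mathcal{U}))(A)$ as follows. Given a deformation $X_A$ of $X$ over $\Spec(A)$, by \cite[Lemma II.1.3]{Sernesi} every restriction $X_A|_{U_i}$ is trivial, so a choice of trivializations $\alpha_i\colon\mathcal{O}(U_i)\otimes A\xrightarrow{\sim}\mathcal{O}_{X_A}(U_i)$ produces automorphisms $\theta_{ij}=\alpha_i^{-1}\alpha_j$ of $\mathcal{O}(U_{ij})\otimes A$ that, as already noted in the text, can be written as $\theta_{ij}=e^{d_{ij}}$ with $d_{ij}\in\Theta_X(U_{ij})\otimes\mathfrak{m}_A$ and satisfy
\[
 e^{d_{jk}}e^{-d_{ik}}e^{d_{ij}}=1\in\exp(\Theta_X(U_{ijk})\otimes\mathfrak{m}_A).
\]
Unwinding the coface maps of $\Theta_X(\mathcal{U})$ (restrictions with index deletion), the element $x=(d_{ij})_{i<j}\in\g_1\otimes\mathfrak{m}_A$ satisfies exactly $e^{\partial_0(x)}e^{-\partial_1(x)}e^{\partial_2(x)}=1$, i.e.\ $x\in Z^1_{sc}(\exp\Theta_X(\mathcal{U}))(A)$. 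Define $\Phi(X_A):=[x]$.

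Next I would check that $\Phi$ is well defined on isomorphism classes. A different set of trivializations $\alpha_i'=\alpha_i\circ e^{a_i}$ with $a_i\in\Theta_X(U_i)\otimes\mathfrak{m}_A$ replaces $\theta_{ij}$ by $e^{-a_i}e^{d_{ij}}e^{a_j}$; and an isomorphism $X_A\cong X_A'$ of deformations is, as spelled out in the text, precisely a collection $(a_i)$ with $e^{-a_i}e^{d_{ij}'}e^{a_j}=e^{d_{ij}}$. In the coface notation of $\Theta_X(\mathcal{U})$ this says $e^{-\partial_1(a)}e^{x'}e^{\partial_0(a)}=e^x$, which is exactly the equivalence relation defining $H^1_{sc}(\exp\Theta_X(\mathcal{U}))(A)$. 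Therefore $\Phi$ descends to a well-defined injective natural transformation of functors.

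For surjectivity I would reverse the construction: starting from a cocycle $x=(d_{ij})\in Z^1_{sc}(\exp\Theta_X(\mathcal{U}))(A)$, glue the trivial deformations $U_i\times\Spec A$ along the $U_{ij}$ using the automorphisms $e^{d_{ij}}$; the cocycle condition is exactly what is needed for the triple overlap identities to hold, so the glued sheaf of $A$-algebras defines a flat deformation $X_A$ with $\Phi(X_A)=[x]$. The main technical point, and the place where one has to be careful, is the interplay between choices of trivializations and the semicosimplicial structure: one must verify that the assignment $(\alpha_i)\mapsto (d_{ij})$ sends changes of trivialization on the deformation side to the coboundary operation $a\mapsto e^{-\partial_1(a)}(-)e^{\partial_0(a)}$ on the semicosimplicial side, with the sign and order conventions matching. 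Once this bookkeeping is verified, naturality in $A$ is automatic and the composition with the isomorphisms of Theorem \ref{teor.H^1=Def TOT} yields the full chain of natural isomorphisms in the statement.
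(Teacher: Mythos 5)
Your proposal is correct and follows essentially the same route as the paper: reduce the last two isomorphisms to Theorem~\ref{teor.H^1=Def TOT} and then identify $\Def_X$ with $H^1_{sc}(\exp\Theta_X(\mathcal{U}))$ by unwinding the gluing description of deformations (transition automorphisms $e^{d_{ij}}$, cocycle condition, and change of trivialization matching the coboundary relation), exactly as in the discussion preceding the theorem and its proof. You are somewhat more explicit about surjectivity and the coface bookkeeping, but the substance is identical.
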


\begin{proof} By Theorem~\ref{teor.H^1=Def TOT}, it is  sufficient to prove
$\Def_X \cong H^1 _{sc}( \exp \Theta_X(\mathcal{U}))$. By
definition,
\[
Z ^1_{sc}(\Theta_X(\mathcal{U}) )(A)=\{ \{x_{ij}\} \in {
\prod_{i<j}\Theta_X(U_{ij})} \otimes \mathfrak{m}_A   \mid
e^{x_{jk}}e^{-x_{ik}}e^{x_{ij }}=1 \  \forall \ i<j<k\},
\]
for each $A \in \Art$. Moreover, given  $x=\{x_{ij}\} $ and
$y=\{y_{ij}\} \in  { \prod_{i<j}\Theta_X(U_{ij})} \otimes
\mathfrak{m}_A $, we have $x\sim y$ if and only if there exists
$a=\{a_{i}\} \in { \prod_{i}\Theta_X(U_{i})} \otimes
\mathfrak{m}_A $ such  that   $e^{-a_{j }}e^{ x_{ij}}e^{a_{i
}}=e^{y_{ij}}$ for all $i<j$.
\end{proof}

\begin{remark} \label{remark.ko-spen-alge =tot}
Note that, when  $\K=\mathbb{C}$, the $L_{\infty}$-algebra $
\widetilde{\tot} (\Theta_X(\mathcal{U}))$ is quasi-isomorphic to
the Kodaira-Spencer differential graded Lie algebra of $X$.

\end{remark}

\section{Proof of the main theorem}\label{sez.Proof main theorem}

In this section, we use the results developed before to give  a
complete  algebraic proof of the following  theorem.

\begin{theorem}\label{thm.maintheorem}
Let  $X$ be a smooth projective variety  of dimension $n$, defined
over an algebraically closed field of characteristic 0. If the
contraction map
\[
H^*(\Theta_X)\xrightarrow{\bi}\Hom^*(H^*(\Omega^n_X),H^*(\Omega^{n-1}_X))\]
is injective, then, for every affine open cover $\mathcal{U}$ of
$X$, the DGLA $\tot_{TW}(\Theta_X(\mathcal{U}))$ is quasi-abelian.
\end{theorem}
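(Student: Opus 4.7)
The plan is to verify Lemma~\ref{lem.criterioquasiabelianita} with source $L:=\tot_{TW}(\Theta_X(\mathcal{U}))$ and a target $\widetilde{\tot}(\chi^{\Delta})$ built from a suitable inclusion $\chi\colon\mathcal N\hookrightarrow\mathcal M$ of DGLAs. The $L_{\infty}$-morphism to feed into the lemma will come from the contraction-induced Cartan homotopy of Example~\ref{exam.cartan su ogni aperto}, lifted to the Thom--Whitney level by Proposition~\ref{prop.TWforcontractions} and inserted into Theorem~\ref{thm.FM cartan --- morfismo}. Quasi-abelianity of the target will follow, via Example~\ref{exe. marco Hom(U,U) Hom(WW)} and Proposition~\ref{prop.criterion}, from the algebraic degeneration of the Hodge--de Rham spectral sequence of Faltings and Deligne--Illusie; and the injectivity on cohomology of the linear part of the morphism will be precisely the hypothesis of the theorem, once correctly translated.

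Concretely, set $W:=\tot_{TW}(\Omega^{\ast}_X(\mathcal{U}))$ and let $U\subset W$ be the sub-differential-graded vector space $\tot_{TW}(\Omega^n_X(\mathcal{U}))$, where $\Omega^n_X$ is viewed as a subcomplex of $\Omega^{\ast}_X$ concentrated in degree $n$ (this is legal because $d\Omega^n_X=0$). Via the spectral sequence of the stupid filtration, the map $H^*(U)\to H^*(W)$ is identified with the inclusion $F^n H^*_{dR}(X)\hookrightarrow H^*_{dR}(X)$ of the top Hodge piece into algebraic de Rham cohomology, and is therefore injective by $E_1$-degeneration. With $\mathcal M=\Hom^*(W,W)$ and $\mathcal N=\{f\in\mathcal M\mid f(U)\subset U\}$, Example~\ref{exe. marco Hom(U,U) Hom(WW)} and Proposition~\ref{prop.criterion} applied to $\chi\colon \mathcal N\hookrightarrow\mathcal M$ then yield that the $L_{\infty}$-algebra $\widetilde{\tot}(\chi^{\Delta})$ is quasi-abelian.

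By Example~\ref{exam.cartan su ogni aperto} and Proposition~\ref{prop.TWforcontractions}, contraction defines a Cartan homotopy $\bi\colon L\to\mathcal M$. Since the Lie derivative $\bl_{\xi}$ preserves the grading of $\Omega^{\ast}_X$, it preserves $U$, so $\bl$ factors through $\mathcal N$, and Theorem~\ref{thm.FM cartan --- morfismo} yields a linear $L_{\infty}$-morphism
\[
\Phi\colon L\longrightarrow \widetilde{\tot}(\chi^{\Delta}),\qquad \Phi(\xi)=(\bl_{\xi},\bi_{\xi}).
\]
To close the argument via Lemma~\ref{lem.criterioquasiabelianita} I must check that $\Phi$ is injective on cohomology. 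From the proof of Proposition~\ref{prop.criterion}, $H^*(\widetilde{\tot}(\chi^{\Delta}))\cong H^{*-1}(\mathcal M/\mathcal N)\cong \Hom^{*-1}(H^*(U),H^*(W/U))$, and the class of $\Phi(\xi)$ is represented by $\bi_{\xi}\colon U\to W/U$. Since $\bi_{\xi}$ strictly lowers the form degree by one and $U$ sits in top form-degree, this class lands in the single graded piece $\Hom^{*-1}(H^{*-n}(\Omega^n_X),H^{*-(n-1)}(\Omega^{n-1}_X))$; there it is visibly the contraction of the hypothesis, which is injective. Hence $\Phi$ is injective on cohomology, and the conclusion follows since $\widetilde{\tot}(\Theta_X(\mathcal{U}))$ and $\tot_{TW}(\Theta_X(\mathcal{U}))$ are $L_{\infty}$-quasi-isomorphic by Corollary~\ref{cor.Linfinitosultot}.

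The main obstacle is the translation performed in the last paragraph: identifying the abstract class of $\Phi(\xi)$ in $H^*(\widetilde{\tot}(\chi^{\Delta}))$ with the concrete contraction $H^*(\Theta_X)\to \Hom^*(H^*(\Omega^n_X),H^*(\Omega^{n-1}_X))$. This combines Hodge--de Rham degeneration (pinning down $H^*(U)$ as $F^n H^*_{dR}$) with the rigidity of the degree shift of $\bi$, which forces the image in $W/U$ to live in the $\Omega^{n-1}_X$-component alone.
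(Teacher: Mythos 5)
Your proposal is correct and follows essentially the same route as the paper: the same choice of $W=\tot_{TW}(\Omega^{\ast}_X(\mathcal{U}))$ and $U=\tot_{TW}(\Omega^{n}_X(\mathcal{U}))$, the same DGLA pair $\mathcal N\hookrightarrow\mathcal M$ made quasi-abelian via Example~\ref{exe. marco Hom(U,U) Hom(WW)} and Hodge--de Rham degeneration, the same Cartan-homotopy morphism $(\bl,\bi)$, and the same identification of the cone with $\Hom^*(U,W/U)$ to check injectivity on cohomology. The only point left implicit in your last paragraph---that $H^*(\Omega^{n-1}_X)$ really injects into $H^*(W/U)$ so that your ``single graded piece'' sits inside $\Hom^{*-1}(H^*(U),H^*(W/U))$---is again a consequence of $E_1$-degeneration, and is exactly the second injection the paper records explicitly.
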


\begin{proof}
According to  Lemma~\ref{lem.criterioquasiabelianita}, it is
sufficient to prove that there exist a quasi abelian
$L_{\infty}$-algebra $H$ and a morphism
${\tot}_{TW}(\Theta_X(\mathcal{U}))
 \to H$  that is injective in cohomology.

Let $n$ be the dimension of $X$  and denote by
$\Omega^{\ast}_X$ the algebraic de Rham complex.
For every $i\le n$, let $\check{C}(\mathcal{U},\Omega^i_X)$ be
 the \v{C}ech complex of the coherent sheaf $\Omega_X^i$, with
respect to the affine cover $\mathcal{U}$, and
$\check{C}(\mathcal{U},\Omega^{\ast}_X)$  the total complex of the
semicosimplicial differential graded vector space
$\Omega^{\ast}_X(\mathcal{U})$ (Example~\ref{example. chec OMEGA
X}). Notice that
\[
\check{C}(\mathcal{U},\Omega^{\ast}_X)^i=
\bigoplus_{a+b=i}\check{C}(\mathcal{U},\Omega^{a}_X)^b.
\]
and $\check{C}(\mathcal{U},\Omega^{n}_X)$ is a subcomplex of
$\check{C}(\mathcal{U},\Omega^{\ast}_X)$.

Then, we have a commutative diagram of complexes with horizontal
quasi-isomorphisms:

\[\xymatrix{ \check{C}(\mathcal{U},\Omega^{n}_X)=\tot( \Omega^{n}_X(\mathcal{U}))
\ar[rr]^{E} \ar@{^{(}->}[d] & & {\tot}_{TW}( \Omega^{n}_X(\mathcal{U}))
 \ar@{^{(}->}[d] \\  \check{C}(\mathcal{U},\Omega^{*}_X)=\tot( \Omega^{*}_X(\mathcal{U}))
\ar[rr]^{E} & & {\tot}_{TW}( \Omega^{*}_X(\mathcal{U})).
}\]

Since $\K$ has characteristic 0 and $X$ is smooth and  proper, the
Hodge spectral sequence degenerates at $E_1$ (we refer to
\cite{faltings,DI} for a purely algebraic proof of this fact).
Therefore, we have  injective maps
\[ H^*(X,\Omega^n_X)=H^*(\check{C}(\mathcal{U},\Omega^{n}_X))
\hookrightarrow
H^*(\check{C}(\mathcal{U},\Omega^{\ast}_X))=H^*_{DR}(X/\K).\]
\[ H^*(X,\Omega^{n-1}_X)=H^*(\check{C}(\mathcal{U},\Omega^{n-1}_X))
\hookrightarrow
H^*\left(\frac{\check{C}(\mathcal{U},\Omega^{\ast}_X)}{
\check{C}(\mathcal{U},\Omega^{n}_X)}\right).\]

Thus, the natural inclusions of complexes
\[{\tot}_{TW}(\Omega^{n}_X(\mathcal{U}))\to {\tot}_{TW}(
\Omega^{*}_X(\mathcal{U})),\]
\[{\tot}_{TW}(\Omega^{n-1}_X(\mathcal{U}))\to \frac{\tot_{TW}(
\Omega^{*}_X(\mathcal{U}))}{{\tot}_{TW}(\Omega^{n}_X(\mathcal{U}))},\]
are  injective in cohomology.

According to Example~\ref{exam.cartan su ogni aperto},
for every open subset $U\subset X$,
the contraction
of vector fields with
differential forms   defines a Cartan homotopy:
\[
\bi\colon \Theta_X(U)\to \Hom^{*}(\Omega^{*}_X(U),
\Omega^{*}_X(U)),\qquad \bi_{\xi}(\omega)=\xi\contr\omega.
\]
Since the contraction ${\mspace{1mu}\lrcorner\mspace{1.5mu}}$
commutes with   restrictions to open subsets, we have a
semicosimplicial contraction
\[
\Theta_X(\mathcal{U})\times \Omega^{*}_X(\mathcal{U})
\xrightarrow{\;\contr\;} \Omega^{*}_X(\mathcal{U}),
\]
and, by  Proposition~\ref{prop.TWforcontractions}, this induces
naturally a Cartan homotopy
\[\bi\colon {\tot}_{TW}( \Theta_X(\mathcal{U})) \longrightarrow
\Hom^*({\tot}_{TW}( \Omega^*(\mathcal{U})),
{\tot}_{TW}(\Omega^*(\mathcal{U}))).\]

Notice that, for every $\xi\in {\tot}_{TW}(\Theta_X(\mathcal{U}))$
and every $i$, we have
\[ \bi_{\xi}({\tot}_{TW}(\Omega^i(\mathcal{U})))\subset
{\tot}_{TW}(\Omega^{i-1}(\mathcal{U})),\]
\[ \bl_{\xi}({\tot}_{TW}(\Omega^i(\mathcal{U})))\subset
{\tot}_{TW}(\Omega^{i}(\mathcal{U})),\qquad \bl_{\xi}
=d\bi_{\xi}+\bi_{d\xi}.\]

Moreover, the assumption of the theorem, together with
\cite[3.1]{navarro}, implies that the map \[{\tot}_{TW}(
\Theta_X(\mathcal{U})) \xrightarrow{\bi}
\Hom^*\left(\tot_{TW}(\Omega^{n}_X(\mathcal{U})),
\tot_{TW}(\Omega^{n-1}_X(\mathcal{U}))\right)\] is injective in
cohomology.\par

Next, consider the differential graded Lie algebras
\[
M=\Hom^*({\tot}_{TW}( \Omega^{*}_X(\mathcal{U})), {\tot}_{TW}
\Omega^{*}_X(\mathcal{U})),
\]
\[
L=\{f\in M \mid
f({\tot}_{TW}( \Omega^{n}_X(\mathcal{U})))\subset
{\tot}_{TW}( \Omega^{n}_X(\mathcal{U}))\},
\]
and let $\chi\colon L \to M$ be the inclusion. According to
Example~\ref{exe. marco Hom(U,U) Hom(WW)}, the
$L_{\infty}$-algebras $\widetilde{\tot}(\chi^{\Delta})$ is
quasi-abelian.

Moreover, we have
$\bl({\tot}_{TW}(\Theta_X(\mathcal{U})))\subset L$ and so, by
Theorem \ref{thm.FM cartan --- morfismo}, there exists a linear
$L_\infty$-morphism
\[
{\tot}_{TW}( \Theta_X(\mathcal{U})) \xrightarrow{(\bl,\bi)}
\widetilde{\tot}(\chi^{\Delta}),\quad
x \mapsto (\bl_x, \bi_x).
\]
Since the map $\chi$ in injective, its mapping cone
${\tot}(\chi^{\Delta})$ is quasi-isomorphic to its  cokernel
\[\coker \chi= \Hom^*\left(\tot_{TW}(\Omega^{n}_X(\mathcal{U})),
\frac{\tot_{TW}(\Omega^{*}_X(\mathcal{U}))}{
\tot_{TW}(\Omega^{n}_X(\mathcal{U}))}\right)\]
and we have a commutative diagram of complexes

\[\xymatrix{ {\tot}_{TW}( \Theta_X(\mathcal{U}))
\ar[rr]^{(\bl,\bi)} \ar[d]^{\bi} & &  {\tot}(\chi^{\Delta})
 \ar[d]^{q-iso} \\
\Hom^*\left(\tot_{TW}(\Omega^{n}_X(\mathcal{U})),
\tot_{TW}(\Omega^{n-1}_X(\mathcal{U}))\right)
\ar[rr]^{\qquad\qquad\qquad\alpha\!\!\!\!} & & \coker \chi.
\\ }\]

Since both $\bi$ and $\alpha$ are injective in cohomology, also
the $L_\infty$-morphism $(\bl,\bi)$ is injective in cohomology.

\end{proof}

\begin{theorem}\label{cor.trivial K- TOT quasiabelia}
Let  $\mathcal{U}=\{U_i\}$ be  an affine open  cover of a smooth
projective variety  $X$ defined over an algebraically closed field
of characteristic 0. If the canonical bundle of $X$ is trivial or
torsion, then  the DGLA $\tot_{TW}(\Theta_X(\mathcal{U}))$ is
quasi-abelian.
\end{theorem}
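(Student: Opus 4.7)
The plan is to reduce Theorem~\ref{cor.trivial K- TOT quasiabelia} to the already established Theorem~\ref{thm.maintheorem} by verifying that, under the hypothesis that $K_X=\Omega^n_X$ is trivial or torsion, the contraction map
\[ H^*(\Theta_X)\xrightarrow{\bi}\Hom^*(H^*(\Omega^n_X),H^*(\Omega^{n-1}_X))\]
is injective. The argument splits naturally into the trivial case and the torsion case, the latter being reduced to the former by passage to a finite étale Galois cover.

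Suppose first that $K_X$ is trivial, and fix a nowhere-vanishing global section $\omega\in H^0(X,\Omega^n_X)$. The sheaf-level contraction $\xi\mapsto \xi\contr\omega$ is an $\Oh_X$-linear isomorphism $\Theta_X\xrightarrow{\sim}\Omega^{n-1}_X$, since it realizes the classical identification $\Theta_X\cong\Omega^{n-1}_X\otimes K_X^{-1}$ composed with the trivialization. Consequently the induced map $H^*(\Theta_X)\to H^*(\Omega^{n-1}_X)$, $[\xi]\mapsto [\xi\contr\omega]$, is an isomorphism. This map factors as $\bi$ followed by evaluation at $[\omega]\in H^0(\Omega^n_X)$, so $\bi$ itself must be injective.

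For the torsion case, let $m$ be the smallest positive integer with $K_X^{\otimes m}\cong\Oh_X$, and let $\pi\colon \tilde X\to X$ be the associated cyclic étale Galois cover of degree $m$, whose Galois group $G$ is cyclic of order $m$. Then $\tilde X$ is again smooth projective over $\K$ and $K_{\tilde X}=\pi^*K_X$ is trivial. Since $\pi$ is étale, $\pi^*\Theta_X=\Theta_{\tilde X}$ and $\pi^*\Omega^i_X=\Omega^i_{\tilde X}$, and the contraction is compatible with $\pi^*$. We obtain a commutative square
\[\xymatrix{
H^*(\Theta_X) \ar[r]^-{\bi} \ar[d]^{\pi^*} & \Hom^*(H^*(\Omega^n_X),H^*(\Omega^{n-1}_X)) \ar[d] \\
H^*(\Theta_{\tilde X}) \ar[r]^-{\bi} & \Hom^*(H^*(\Omega^n_{\tilde X}),H^*(\Omega^{n-1}_{\tilde X}))
}\]
whose bottom row is injective by the trivial case applied to $\tilde X$. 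In characteristic zero the $G$-averaging projector realizes $\Oh_X$ as a direct summand of $\pi_*\Oh_{\tilde X}$, so $\pi^*$ is injective on the cohomology of any coherent sheaf on $X$. A diagram chase then forces the upper $\bi$ to be injective, and Theorem~\ref{thm.maintheorem} delivers the quasi-abelianity of $\tot_{TW}(\Theta_X(\mathcal{U}))$.

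The main, though modest, obstacle is the torsion case: one must construct the cyclic cover $\tilde X$ algebraically in a way that produces a trivial canonical bundle upstairs, and check the functoriality of the contraction map with respect to étale pullback so that the square above genuinely commutes. Everything else is the clean trivial-$K_X$ computation together with the standard fact that coherent cohomology injects along a finite étale cover in characteristic zero.
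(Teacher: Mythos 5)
Your trivial-canonical-bundle case is correct and is exactly the paper's argument: contraction with a trivializing section $\omega$ is a sheaf isomorphism $\Theta_X\cong\Omega^{n-1}_X$, the induced isomorphism on cohomology factors through $\bi$ followed by evaluation at $[\omega]$, so $\bi$ is injective and Theorem~\ref{thm.maintheorem} applies.

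The torsion case, however, has a genuine gap, and the strategy itself cannot be repaired. You are trying to verify the hypothesis of Theorem~\ref{thm.maintheorem} for $X$ itself, but that hypothesis is in general \emph{false} when $K_X$ is nontrivial torsion. A torsion but nontrivial line bundle has no nonzero sections, so $H^0(\Omega^n_X)=0$ and the evaluation trick is unavailable; worse, for an Enriques surface one has $H^*(\Omega^2_X)$ concentrated in degree $2$ and $H^3$ of any coherent sheaf zero, so the contraction $H^1(\Theta_X)\otimes H^2(\Omega^2_X)\to H^3(\Omega^1_X)=0$ vanishes identically while $H^1(\Theta_X)\neq 0$: the map $\bi$ is not injective. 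Concretely, your diagram does not exist as drawn: $\Hom^*(A,B)$ is contravariant in $A$, so $\pi^*$ does not induce the right-hand vertical arrow; and if you define it via the $G$-invariant projector, the square fails to commute precisely because the trivializing form $\tilde\omega$ on $\tilde X$ lives in a \emph{non-invariant} isotypic component of $H^0(\Omega^n_{\tilde X})=\bigoplus_j H^0(X,\Omega^n_X\otimes K_X^{-j})$ (it corresponds to $1\in H^0(X,\Oh_X)$, the $j=1$ piece), so the injectivity of $\bi_{\tilde X}$ tells you that $\pi^*\eta$ pairs nontrivially with classes you cannot see from $H^*(\Omega^n_X)$. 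The paper avoids this by transporting the \emph{whole deformation problem} rather than the hypothesis: the pullback $\widetilde{\tot}(\Theta_X(\mathcal{U}))\to\widetilde{\tot}(\Theta_Y(\mathcal{V}))$ along the \'etale cyclic cover is injective in cohomology (by the trace splitting of $\Oh_X\to\pi_*\Oh_Y$, as you correctly note), its target is quasi-abelian by the trivial case applied to $Y$, and Lemma~\ref{lem.criterioquasiabelianita} then gives quasi-abelianity of the source. You should replace your commutative square with this $L_\infty$-level argument.
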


\begin{proof} Assume first that $X$ has trivial canonical bundle.
If $n$ is the dimension of $X$, the cup product with a nontrivial
section of the canonical bundle gives the isomorphisms
$H^i(\Theta_X)\simeq H^i(\Omega^{n-1}_X)$ and the conclusion
follows immediately from Theorem~\ref{thm.maintheorem}. If $X$ has
torsion canonical bundle we may consider the canonical cyclic
cover $\pi\colon Y\to X$ and the affine open cover
$\mathcal{V}=\{\pi^{-1}(U_i)\}$. Now the variety $Y$ has trivial
canonical bundle and then the $L_{\infty}$-algebra
$\widetilde{\tot}(\Theta_Y(\mathcal{V}))$ is quasi-abelian. Since
$\pi$ is an unramified cover the natural injective map
$\widetilde{\tot}(\Theta_X(\mathcal{U}))\to\widetilde{\tot}(\Theta_Y(\mathcal{V}))$
is also injective in cohomology and we conclude the proof by using
the same argument of Theorem~\ref{thm.maintheorem}.
\end{proof}

\begin{remark}
When $\K=\mathbb{C}$, the previous theorems together with Remark
\ref{remark.ko-spen-alge =tot}, implies that the Kodaira-Spencer
DGLA is quasi abelian, for a projective manifold with trivial or
torsion  canonical bundle.

\end{remark}

\begin{theorem}\label{thm.kodairaprinciple}
Let  $X$ be a smooth projective variety  of dimension $n$ defined
over an algebraically closed field of characteristic 0. Then, the
obstructions to deformations of $X$ are contained in the kernel of
 the contraction map
\[ H^2(\Theta_X)\xrightarrow{\bi}\prod_{p}
\Hom(H^p(\Omega^n_X),H^{p+2}(\Omega^{n-1}_X)).\]
\end{theorem}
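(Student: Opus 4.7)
The plan is to reuse the construction already built in the proof of Theorem~\ref{thm.maintheorem} and to apply the ``Kodaira principle'' that obstructions are transported by $L_\infty$-morphisms.

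\medskip

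First, I would set up the relevant algebras exactly as in the proof of Theorem~\ref{thm.maintheorem}: form the inclusion of DGLAs
\[
\chi\colon L\hookrightarrow M=\Hom^*({\tot}_{TW}(\Omega^{*}_X(\mathcal{U})),{\tot}_{TW}(\Omega^{*}_X(\mathcal{U}))),
\qquad L=\{f\in M\mid f({\tot}_{TW}(\Omega^{n}_X(\mathcal{U})))\subset {\tot}_{TW}(\Omega^{n}_X(\mathcal{U}))\},
\]
for which Example~\ref{exe. marco Hom(U,U) Hom(WW)} guarantees that $\widetilde{\tot}(\chi^{\Delta})$ is quasi-abelian. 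By Proposition~\ref{prop.TWforcontractions} and Theorem~\ref{thm.FM cartan --- morfismo}, the contraction pair $(\bl,\bi)$ yields a linear $L_\infty$-morphism
\[
F\colon {\tot}_{TW}(\Theta_X(\mathcal{U}))\longrightarrow \widetilde{\tot}(\chi^{\Delta}),\qquad \xi\mapsto (\bl_\xi,\bi_\xi).
\]

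\medskip

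Second, I would invoke the fact that an $L_\infty$-morphism induces a natural transformation of deformation functors which is compatible with obstruction maps: the linear part $F_1$ carries the obstruction class in $H^2({\tot}_{TW}(\Theta_X(\mathcal{U})))$ attached to a deformation over a small extension to the obstruction class in $H^2(\widetilde{\tot}(\chi^{\Delta}))$ attached to its image. Since $\widetilde{\tot}(\chi^{\Delta})$ is quasi-abelian, $\Def_{\widetilde{\tot}(\chi^{\Delta})}$ is unobstructed, so every such image obstruction vanishes. Combined with the isomorphism $\Def_X\cong \Def_{\tot_{TW}(\Theta_X(\mathcal{U}))}$ of Theorem~\ref{thm.defoXcomedeftot} and the identification $H^2(\tot_{TW}(\Theta_X(\mathcal{U})))\cong H^2(X,\Theta_X)$ (from Thom-Whitney/\v{C}ech for a coherent sheaf on an affine cover), this shows that all obstructions to $\Def_X$ lie in $\ker F_1$ on $H^2$.

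\medskip

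Third, I would identify $\ker F_1$ on $H^2$ with the kernel of the contraction map. This is exactly the content of the commutative diagram at the end of the proof of Theorem~\ref{thm.maintheorem}: $F_1$ factors as $\bi$ followed by the map $\alpha$ to $\coker\chi$, and $\alpha$ is injective in cohomology thanks to the algebraic degeneration of the Hodge-de Rham spectral sequence \cite{faltings,DI}. Hence on $H^2$ we have $\ker F_1=\ker\bi$, where, by the natural isomorphism $H^*(\Hom^*(V,W))=\Hom^*(H^*(V),H^*(W))$ applied to $V=\tot_{TW}(\Omega^n_X(\mathcal{U}))$ and $W=\tot_{TW}(\Omega^{n-1}_X(\mathcal{U}))$, the degree-$2$ part of $\bi$ becomes precisely
\[
H^2(\Theta_X)\xrightarrow{\bi}\prod_{p}\Hom(H^p(\Omega^n_X),H^{p+2}(\Omega^{n-1}_X)).
\]

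\medskip

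The only nontrivial ingredient beyond what is already established in the paper is the compatibility of $F$ with obstruction maps; this is the standard obstruction-transport statement for $L_\infty$-morphisms (see, e.g., \cite[Ch.~IX]{manRENDICONTi}), and will be the main point to invoke carefully. Once it is in place, the three steps above give the conclusion.
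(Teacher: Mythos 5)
Your proposal follows essentially the same route as the paper: it reuses the $L_\infty$-morphism $(\bl,\bi)\colon{\tot}_{TW}(\Theta_X(\mathcal{U}))\to\widetilde{\tot}(\chi^{\Delta})$ constructed in the proof of Theorem~\ref{thm.maintheorem}, invokes the standard obstruction-transport principle for $L_\infty$-morphisms together with the unobstructedness of the quasi-abelian target (the paper cites \cite{CCK,ManettiSeattle,IaconoSemireg} for exactly this), and identifies the kernel of the induced map on $H^2$ with the kernel of the contraction map via the injectivity in cohomology of $\alpha$. The only difference is expository: you spell out the last identification, which the paper leaves implicit.
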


\begin{proof}  We have seen in the proof of Theorem~\ref{thm.maintheorem}
that, for every affine open cover $\mathcal{U}$ of $X$, there
exists an $L_{\infty}$-morphism
$\widetilde{\tot}(\Theta_X(\mathcal{U}))\to
\widetilde{\tot}(\chi^{\Delta})$ and that
$\widetilde{\tot}(\chi^{\Delta})$ is quasi-abelian. Therefore, we
are in the condition to apply the general strategy used in
\cite{CCK,ManettiSeattle,IaconoSemireg}: the deformation functor
associated to $\widetilde{\tot}(\chi^{\Delta})$ is unobstructed
and the obstructions of
$\Def_X\simeq\Def_{\widetilde{\tot}(\Theta_X(\mathcal{U}))}$ are
contained in the kernel of the obstruction map
$H^2(\tot(\Theta_X(\mathcal{U})))\to H^2({\tot}(\chi^{\Delta}))$.
\end{proof}

\begin{corollary}\label{cor.btt}
Let $X$ be   a smooth projective variety defined over an
algebraically closed field of characteristic 0. If the canonical
bundle of $X$ is trivial, then $X$ has unobstructed deformations.
\end{corollary}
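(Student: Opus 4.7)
The plan is to reduce the claim directly to the quasi-abelianity result already established in Theorem \ref{cor.trivial K- TOT quasiabelia}, using the dictionary between geometric deformations and the associated $L_\infty$-deformation functor. First I would invoke Theorem \ref{thm.defoXcomedeftot} to identify $\Def_X$ with $\Def_{\widetilde{\tot}(\Theta_X(\mathcal{U}))}$ for an arbitrary affine open cover $\mathcal{U}$ of $X$. Since unobstructedness is a property of the deformation functor, this reduces the problem to showing that $\widetilde{\tot}(\Theta_X(\mathcal{U}))$ has unobstructed deformation functor.

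Next I would apply Theorem \ref{cor.trivial K- TOT quasiabelia} in the trivial-canonical-bundle case: this gives that $\tot_{TW}(\Theta_X(\mathcal{U}))$, and hence also the quasi-isomorphic $L_\infty$-algebra $\widetilde{\tot}(\Theta_X(\mathcal{U}))$, is quasi-abelian. Finally, the remark immediately after Definition \ref{def.quasiabelian} (together with the invariance of $\Def$ under $L_\infty$-quasi-isomorphism) says precisely that the deformation functor of a quasi-abelian $L_\infty$-algebra is unobstructed, which concludes the argument.

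There is essentially no obstacle: all the substantive work has been done upstream, in the proofs of Theorems \ref{thm.maintheorem}, \ref{cor.trivial K- TOT quasiabelia} and \ref{thm.defoXcomedeftot}. An alternative but equally short route would go via Theorem \ref{thm.kodairaprinciple}: if $\omega$ is a trivializing global section of $\Omega^n_X$, the contraction $\xi\mapsto \xi\contr\omega$ is a sheaf isomorphism $\Theta_X\xrightarrow{\simeq}\Omega^{n-1}_X$, so the induced map $H^2(\Theta_X)\to \Hom(H^0(\Omega^n_X),H^2(\Omega^{n-1}_X))$ is injective and Theorem \ref{thm.kodairaprinciple} forces all obstructions to vanish. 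Either way the corollary follows in two or three lines, so the proof proposal amounts to citing the correct chain of already-proved statements.
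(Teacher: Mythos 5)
Your proposal is correct and follows exactly the paper's own proof: identify $\Def_X$ with $\Def_{\widetilde{\tot}(\Theta_X(\mathcal{U}))}$ via Theorem~\ref{thm.defoXcomedeftot}, then apply the quasi-abelianity of Theorem~\ref{cor.trivial K- TOT quasiabelia} and the fact that quasi-abelian $L_\infty$-algebras have unobstructed deformation functors. The alternative route you sketch through Theorem~\ref{thm.kodairaprinciple} is also valid, but the main argument is the one the paper gives.
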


\begin{proof}
The previous Corollary~\ref{cor.trivial K- TOT quasiabelia} implies that
$\widetilde{\tot}(\Theta_X(\mathcal{U}))$ is quasi-abelian and so
$\Def_{\widetilde{\tot}(\Theta_X(\mathcal{U}))}$  is smooth. By
Theorem~\ref{thm.defoXcomedeftot}, $\Def_X \cong
\Def_{\widetilde{\tot}(\Theta_X(\mathcal{U}))}$.
\end{proof}

\begin{remark} Transcendental proofs of the analogue of
Theorem~\ref{thm.kodairaprinciple} for compact K\"{a}hler
manifolds can be found in \cite{CCK,clemens99,ManettiSeattle},
while we refer to \cite{IaconoSemireg,ManettiSeattle} for the
proof that the $T^1$-lifting is definitely insufficient for
proving Theorem~\ref{thm.kodairaprinciple}.
\end{remark}

\end{document}